\documentclass[12pt]{article}
\usepackage{amsmath}
\usepackage{graphicx}
\usepackage{enumerate}
\usepackage{natbib}
\usepackage{url} 


\addtolength{\oddsidemargin}{-.5in}%
\addtolength{\evensidemargin}{-1in}%
\addtolength{\textwidth}{1in}%
\addtolength{\textheight}{1.7in}%
\addtolength{\topmargin}{-1in}%

\usepackage{hyperref,amsthm,amsmath,amssymb,natbib,enumerate,amsbsy,amsfonts, xcolor, graphics,graphicx, comment, algpseudocode, algorithm2e, bbm, booktabs, multirow,url, makecell, float, mathtools, pgfplots}

\usepackage{arxiv}
\usepackage{natbib}
\usepackage[nottoc]{tocbibind}
\usepackage{amsmath} 
\usepackage{comment}
\usepackage[utf8]{inputenc} 
\usepackage[T1]{fontenc}    
\usepackage{hyperref}       
\usepackage{url}            
\usepackage{booktabs}       
\usepackage{amsfonts}       
\usepackage{nicefrac}       

\usepackage{microtype}      
\usepackage{lipsum}
\usepackage{graphicx}
\usepackage{xspace}
\usepackage{enumitem}
\usepackage{dsfont}
\usepackage{graphicx}   
\usepackage{wrapfig}
\usepackage{subcaption}  
\usepackage{mathrsfs}
\usepackage{float} 
\usepackage{natbib}
\usepackage{babel}
\usepackage{ amssymb, mathtools}
\usepackage{mismath}
\usepackage{tikz}

\graphicspath{ {./images/} }

\usepackage{romannum}
\pagestyle{plain}

\newcommand{\iid}{i.i.d.\@\xspace}
\newcommand{\identite}{\mathrm{I}}

\newtheorem{proposition}{Proposition}
\newtheorem{theorem}{Theorem}
\newtheorem{definition}{Definition}
\newtheorem{lemma}{Lemma}

\newtheorem{remark}{Remark}
\newtheorem{assumption}{Assumption}

\renewcommand{\theequation}{
\arabic{equation}%
}

\pgfplotsset{compat=1.18}

\usepackage{xcolor}
\hypersetup{
    colorlinks,
    linkcolor={blue},
    citecolor={blue},
    urlcolor={black}
}

\def\spacingset#1{\renewcommand{\baselinestretch}%
{#1}\small\normalsize} \spacingset{1}


\title{Robust Regression under Adversarial Contamination:\\
Theory and Algorithms for the Welsch Estimator}

\author{
  Ilyes Hammouda \\
  CREST, ENSAE \\
  \texttt{ilyes@ensae.fr}
  \And
  Mohamed Ndaoud \\
  ESSEC Business School \\
  \texttt{mohamed.ndaoud@essec.edu}
  \And
  Abd\!-Krim Seghouane \\
  The University of Melbourne \\
  \texttt{aks@unimelb.edu.au}
}

\date{} 

\begin{document}
\maketitle

\begin{abstract}

Convex and penalized robust regression methods often suffer from a persistent bias induced by large outliers, limiting their effectiveness in adversarial or heavy-tailed settings. In this work, we study a smooth redescending non-convex M-estimator, specifically the Welsch estimator, and show that it can eliminate this bias whenever it is statistically identifiable. We focus on high-dimensional linear regression under adversarial contamination, where a fraction of samples may be corrupted by an adversary with full knowledge of the data and underlying model.

A central technical contribution of this paper is a practical algorithm that provably finds a statistically valid solution to this non-convex problem. We show that the Welsch objective remains locally convex within a well-characterized basin of attraction, and our algorithm is guaranteed to converge into this region and recover the desired estimator. We establish three main guarantees:
(a) non-asymptotic minimax-optimal deviation bounds under contamination,
(b) improved unbiasedness in the presence of large outliers, and
(c) asymptotic normality, yielding statistical efficiency as the sample size grows. Finally, we support our theoretical findings with comprehensive experiments on synthetic and real datasets, demonstrating the estimator’s superior robustness, efficiency, and effectiveness in mitigating outlier-induced bias relative to state-of-the-art robust regression methods.
\end{abstract}

\noindent%
{\it Keywords:}  robustness,  $M$-estimation, minimax theory, non-convex algorithms.
\pagenumbering{arabic}
\section{Introduction}
In the field of statistical modeling and machine learning, robustness to outliers is a critical attribute for practical applications. In general, outliers refer to one or more observations that are markedly different from the bulk of the data and a routine data set may contain 1\% to 10\% (or more) outliers \citep{rousseeuw1986robust}. Linear regression, as one of the cornerstone methodologies, remains ubiquitous in fields, ranging from economics—where it is applied to understand market trends and forecast outcomes \citep{hellwig2014linear}—to sociology, where meta-analysis is used to explore power dynamics and societal heterogeneity \citep{tong2022meta}. Despite their growing importance, traditional methods for leveraging big databases for predictions and identifying causal relationships often lack the robustness required to handle the complexity and variability when observations are corrupted by outliers. The family of Ordinary Least Squares (OLS) estimators is well-suited for dense linear regression problems, offering optimal performance under ideal conditions. However, these estimators are unreliable and exhibit significant sensitivity to anomalous data or heavy-tailed error distributions.
As demonstrated by \citet{SeheultRobust1989}, the presence of even a single outlier with arbitrarily large values can cause the mean squared error of OLS estimators to escalate dramatically, severely undermining their reliability and robustness. Some studies, as in \cite{weisberg1982residuals}, propose the use of data management techniques to address outliers. These methods involve applying tests and diagnostics to identify and remove contaminated observations. They are simple and effective when there is only a single outlier. However, these methods can fail when there are multiple outliers. The main challenge in this case is to counter the masking and swamping effects \citep{hadi1993procedures}.
Additionally, they are largely based on heuristics, with a lack of established theoretical results.


\begin{figure}[H]
    \centering
    \begin{subfigure}[b]{0.48\textwidth}
        \centering
       \scalebox{0.7}{%
    \begin{tikzpicture}
    \begin{axis}[
        width=11cm, height=10cm,
        xlabel={Outlier proportion},
        ylabel={$\left\| \mathbf{E}(\hat{\beta}) - \beta^*\right\|$},
        ymin=0, ymax=1,
        xtick={0,0.02,...,1},
        grid=major,
        grid style={gray!30},
        axis lines=left,
        legend style={at={(0.5,-0.15)}, anchor=north, legend columns=2} 
    ]
    \addplot[
        color=blue,
        thick,
        mark=o
    ] 
    table[
       col sep=semicolon,
        x=delta,
        y=Error
    ] {csv_files/bias_Huber_big_outliers.csv};
    \addlegendentry{Huber}

    \addplot[
        color=red,
        thick,
        mark=x
    ] 
    table[
       col sep=semicolon,
        x=delta,
        y=Error
    ] {csv_files/biais_alpha_div_big_outliers.csv};
    \addlegendentry{Welsch}

    \addplot[
        color=cyan,
        dashed,
    ] 
    table[
       col sep=comma,
        x=delta,
        y=Error
    ] {csv_files/biais_quantile_reg_q_1.csv};
    \addlegendentry{Quantile Regression: $\tau=0.1$}

    \addplot[
        color=brown,
        dashed,
    ] 
    table[
       col sep=comma,
        x=delta,
        y=Error
    ] {csv_files/biais_quantile_regression_q_5.csv};
    \addlegendentry{Quantile Regression: $\tau=0.5$}

    \addplot[
        color=violet,
        dashed,
    ] 
    table[
       col sep=comma,
        x=delta,
        y=Error
    ] {csv_files/biais_quantile_regression_q_9.csv};
    \addlegendentry{Quantile Regression: $\tau=0.9$}

    \end{axis}
\end{tikzpicture}}
    \caption{\small Welsch compared to non-redescending}
    \label{fig::y}
    \end{subfigure}
    \begin{subfigure}[b]{0.48\textwidth}
        \centering
    \scalebox{0.7}{%
    \begin{tikzpicture}
    \begin{axis}[
        width=11cm, height=10cm,
        xlabel={Outlier proportion},
        ylabel={$\left\| \mathbf{E}(\hat{\beta}) - \beta^*\right\|$},
        ymin=0, ymax=0.015,
        xtick={0,0.02,...,1},
        xmin=0, xmax=0.1,
        grid=major,
        grid style={gray!30},
        axis lines=left,
        legend style={at={(0.5,-0.15)}, anchor=north, legend columns=2} 
    ]
    \addplot[
        color=blue,
        thick,
        mark=o
    ] 
    table[
       col sep=semicolon,
        x=delta,
        y=Error
    ] {csv_files/bias_tukey_high_outliers.csv};
    \addlegendentry{Tukey’s biweight}

    \addplot[
        color=red,
        thick,
        mark=x
    ] 
    table[
       col sep=semicolon,
        x=delta,
        y=Error
    ] {csv_files/biais_alpha_div_big_outliers.csv};
    \addlegendentry{Welsch}
        \addplot[
        color=cyan,
        dashed,
    ] 
    table[
       col sep=semicolon,
        x=delta,
        y=Error
    ] {csv_files/bias_Hampel_high_outliers.csv};
    \addlegendentry{Hampel’s three-part}
    \end{axis}
\end{tikzpicture}}
        \caption{ Welsch compared to hard redescending}
        \label{fig::b}
    \end{subfigure}
    \caption{\small Comparison of the Euclidean norm of the bias of the Welsch estimator against other robust estimators.}
    \label{fig::bias_figure}
\end{figure}

Thus, the study of $M$-estimators with inherent robust properties is essential for addressing challenges in statistical estimation \citep{rousseeuw1984least,HuberRobustLocation1964}, particularly in the presence of outliers. Prior works have demonstrated that the Huber estimator exhibits interesting robustness properties in handling outliers  \citep{Outlier-robust_D,sasai2020robust, minsker2024robust}, and asymptotically converges to a normal distribution under the presence of heavy-tailed noise \citep{Distributed_S}. It has also been used in various applications \citep{sun2020adaptive,liu2024robust}. Yet it is known that non-convex loss functions are more effective in dealing with multiple gross outliers with possibly high leverage values \citep{huber1981robust,shevlyakov2008redescending}. Hence in low-dimensional problems, a key motivation for using a non-convex loss is its enhanced statistical efficiency. Indeed, the Huber estimator suffers from an inherent bias, especially in the presence of large magnitude outliers, as highlighted in Figure \ref{fig::bias_figure}. This experiment was conducted using synthetically generated data in accordance with the linear regression model specified in \eqref{model}, incorporating heavy-tailed noise drawn from a Pareto distribution and varying levels of outlier contamination. The underlying true parameter vector $\beta^*$, was fixed throughout the experiment to provide a consistent ground truth for performance evaluation. At each iteration, the proportion of data points contaminated by large outliers was systematically increased to assess the robustness of the estimator under escalating adversarial conditions. For each specified contamination level, we independently generated $5000$ realizations of the design matrix $X$ and the response vector $Y$ according to \eqref{model}. In each realization, the estimator of $\beta^*$ was computed, and the empirical bias was subsequently estimated by averaging the deviations between the estimated and true parameter vectors across the $5000$ trials. This large-scale simulation framework allows for a statistically reliable assessment of estimator's bias under varying degrees of contamination, thereby highlighting the limitations of convex robust methods, such as Huber and quantile regression, in extreme settings.

In this paper, we investigate the minimum distance estimator obtained from an $\alpha$-divergence, commonly known as the Welsch estimator \citep{dennis1978techniques},  under a Gaussian nominal model. Methods based on $\alpha$-divergence, viewed as a generalization of classical divergence measures, have recently emerged as a versatile tool for enhancing robustness in Bayesian settings (\cite{hernandez2016black}, \cite{li2017dropout}). We establish theoretical guarantees showing that the Welsch estimator attains minimax optimality when the data are subject to heavy‐tailed noise and contaminated by outliers. To the best of our knowledge, these are the first non-asymptotic optimality results for the Welsch estimator.
Finally, we illustrate our theoretical findings through experiments on both synthetic and real‐world datasets, comparing the Welsch estimator against previously proposed hard redescending estimators (Tukey’s biweight estimator \citep{beaton1974fitting}, Hampel’s three‑part M‑estimators \citep{hampel1974influence}) as well as classical convex robust estimators such as Huber’s M-estimator \citep{Huber1973Robustregression}.  
\subsection{Related work}
\label{alpha_div_section}
Consider the classical linear regression model
\begin{equation}
    Y_i = X_i^\top \beta^* + \xi_i,
    \quad
    Y_i \in \mathbb{R},\; X_i \in \mathbb{R}^p,\; i = 1,\dots,n,
\end{equation}
where $\xi_i$ denotes an independent noise term.  Classical M‑estimators in robust regression \citep{Huber1973Robustregression} seek
\begin{equation}
    \widehat\beta_{\mathrm{M}}
    =
    \arg\min_{\beta\in\mathbb{R}^p}
    \sum_{i=1}^n \rho\bigl(Y_i - X_i^\top \beta\bigr),
\end{equation}
for a loss function $\rho$. The notion of redescending appeared first in \citep{princeton}. On the one hand, Huber’s estimator is based on a convex loss function and hence is considered non-redescending as it assigns nonzero weights to arbitrarily large residuals.
 On the other hand, redescending estiamtors (e.g.\ Welsch’s estimator \citep{dennis1978techniques}, Tukey’s biweight \citep{beaton1974fitting} and Hampel’s three‑part M‑estimators \citep{hampel1974influence}) employ influence functions that taper continuously to zero beyond a threshold, assigning almost zero weight to large residuals. This largely nullifies the impact of gross outliers, achieving a higher breakdown point (up to $0.5$) and superior bias control in the presence of extreme contamination \citep{hampel1975beyond,maronna2006robust,SeheultRobust1989,Highbreakdownpoint}.  From a practical perspective, \cite{she2011outlier} demonstrates that appropriately chosen non-convex penalty functions significantly improve outlier detection performance within penalized regression frameworks. These methods outperform convex penalization approaches, such as those based on the Huber loss, which fail to adequately address challenging issues like masking and swamping effects.

In this study we aim to analyze the inherent robustness properties of the Welsch estimator. We remind the reader of the corresponding loss function: 
\begin{equation}
\label{alpha_div_based_loss_function}
    l_{\tau}(x)=\frac{1}{\tau} \left( 1-\exp\left( \frac{-\tau x^2 }{2}\right)\right),\ \tau>0.
\end{equation}

\begin{figure}[!t]
    \centering 
    \scalebox{1}{%
    \begin{tikzpicture}
    \begin{axis}[
        width=12cm, 
        height=6cm, 
        xlabel={$x$}, 
        ylabel={$y$}, 
        xmin=-3, xmax=3, 
        ymin=0, ymax=2, 
        axis lines=middle, 
        legend style={at={(0,1)},anchor=north west},
        grid=major,
        grid style={ gray!30},
    ]
        \addplot[
            samples=200, 
            thick,
            dashed,
            red, 
        ] {(1 - exp(-0.5 * x^2))};
        \addlegendentry{Welsch} 
        \addplot[
            domain=-3:-1, 
            samples=100,
            thick,
            blue,
            no markers
        ] { -x - 1/2 };

        \addplot[
            domain=-1:1, 
            samples=100,
            thick,
            blue,
            no markers
        ] { x^2 / 2 };

        \addplot[
            domain=1:3, 
            samples=100,
            thick,
            blue,
            no markers
        ] { x - 1/2 };

        \addlegendentry{Huber} 


    \end{axis}
\end{tikzpicture}}
    	\caption{\label{functions}\small Comparison between Huber and Welsch loss functions.} 
\end{figure}

Figure~\ref{functions} shows the Welsch and Huber loss functions, illustrating the difference between non-redescending and redescending estimators: both behave quadratically near the origin but flatten beyond their respective cutoffs, ensuring that outliers fall within the plateau region and contribute negligibly to the total loss \citep{HuberRobustLocation1964,huber2011robust}.  However, Welsch’s non‑convex shape yields enhanced stability, particularly under heavy‑tailed or arbitrarily corrupted noise, while Huber’s convexity admits only partial mitigation of outlier influence.  We will demonstrate these practical advantages in Section~\ref{sec:simu}.

Despite their empirical success in applications ranging from computer vision to bioinformatics \citep{de2021review,tian2022recent,hippke2019wotan}, non‑convex M‑estimators have received comparatively little theoretical treatment, in contrast with convex M-estimators \citep{Martinminmax,yu2017robust,hampel1981change}, owing largely to the analytical challenges posed by non‑convex optimization.  Although the Welsch loss function has historically appeared as an ad-hoc choice, a recent work \citep{Iqbalalphadivbased} have established a formal connection between this loss and the $\alpha$-divergence \citep{renyi1961measures,csiszar1967information,ali1966general}, highly studied in the Bayesian statistics literature \citep{beran1977minimum,tamura1986minimum,simpson1987minimum,lindsay1994efficiency,hooker2014bayesian}. Recent works in Bayesian statistics have further investigated both theoretical and practical aspects of the $\alpha$-divergence and its related measures within the framework of Variational Inference \citep{li2016renyi,daudel2023alpha,daudel2021infinite,bui2016black,dieng2017variational,daudel2023monotonic,daudel2021mixture,rodriguez2022adversarial}. For example, \cite{li2016renyi} examines the application of $\alpha$-divergence in message-passing algorithms. This connection highlights the inherent robustness properties of the Welsch estimator. Notably, \cite{knoblauch2022optimization} demonstrates that Rényi's $\alpha$-divergence \citep{renyi1961measures} exhibits several theoretically appealing characteristics relative to the $\beta$ and $\gamma$ divergences, particularly with respect to maintaining prior robustness without inducing artificial shrinkage in marginal variances.\\
Building on these insights, we establish, to the best of our knowledge, the first deviation bound for the Welsch estimator. Our results show that it achieves minimax optimality with respect to adversarial contamination and yields lower estimation bias compared to contemporary state-of-the-art estimators. Furthermore, we propose a general two-stage analytical framework for studying non-convex loss functions. This framework may be extended to a broader class of non-convex robust losses.

\subsection{Contributions}
Section \ref{sec:theory} is devoted to the theoretical properties of the Welsch estimator. Our contribution is threefold:
\begin{itemize}[label=-]
\item In Section \ref{subsec:alpha}, we provide a comprehensive analysis of the Welsch loss landscape and discuss the inherent robustness properties of the objective function. Our key finding is that the objective function is strictly convex, with high probability, over a suitably chosen basin of attraction. This result emphasizes the practicality of this procedure, since most popular known robust estimators such as the Huber estimator or the LAD estimator fall in a convex set over which the Welsch loss function is convex. This, in turns, bypasses the non-convexity limitation of the latter. Our analysis may be of independent interest, as it provides a different perspective for rigorously analyzing non-convex estimation procedures, for instance other redescending procedures.

\item Next, we derive, in Section \ref{subsec:nonasymp}, a non-asymptotic result indicating that the convergence rate of the estimator is of the order $\left(\sqrt{\frac{p}{n}}+\frac{o}{n} \right)$ when the noise is sub-Gaussian, up to a logarithmic factor, and $\left(\sqrt{\frac{p}{n}}+\sqrt{\frac{o}{n}}\right)$, when the noise only has finite variance. As a consequence the Welsch estimator is minimax optimal. To the best of our knowledge, this is the first theoretical result establishing robustness optimality for this estimator. We derive our results within an adversarial‐contamination framework that entails greater analytical complexity than the traditional Huber contamination model.
Additionally, we establish a second non-asymptotic result showing that our method can enjoy bias-free deviations when the outliers are large enough enhancing the superiority of this method over Huber's estimator. 

\item Lastly, we prove, in Section \ref{subsec:asymp} that, asymptotically, the estimator converges to a normal distribution of variance $1$, under minimal Assumptions, showing its efficiency.
\end{itemize}

 Our theoretical findings are corroborated by numerical experiments in Section \ref{sec:simu} where we present the results of simulations performed on both synthetic and real-world data. All the proofs are deferred to the Supplementary Material.

\subsection{Adversarial contamination in linear regression}
In what follows, we assume that the data is generated according to the following mean shift linear regression model: 
\begin{equation}
    \mathbf{Y}=\mathbf{X} \beta^* + \theta + \xi,
    \label{model}
\end{equation}
Where
$\mathbf{Y} := \begin{pmatrix} Y_1, Y_2, \cdots, Y_n \end{pmatrix}^T \in \mathbf{R}^n$, 
$\mathbf{X}:=\left[X_1; X_2 ;\cdots;X_n \right]^T \in \mathbf{R}^{n \times p}$, 
$\beta^* := \begin{pmatrix} \beta_1, \beta_2, \cdots, \beta_p \end{pmatrix}^T \in \mathbf{R}^p$, 
$\theta := \begin{pmatrix} \theta_1 , \theta _2, \cdots, \theta _n \end{pmatrix}^T \in \mathbf{R}^n$ and 
$\xi := \begin{pmatrix} \xi_1 , \xi _2, \cdots, \xi _n \end{pmatrix}^T \in \mathbf{R}^n$.
We suppose that the $n$-data samples $(X_1,Y_1), \cdots,(X_n,Y_n)$ are independent and identically distributed (\iid).  These observations are also affected by a noise term \((\xi_i)_{i=1}^{n} \in \mathbf{R}\) whose inputs are assumed to be \iid, of unit variance and zero mean, and independent of the regression vectors \(X_i\). Finally, we consider that the measurements \((Y_i)_{i=1}^{n}\) are contaminated by an adversarial noise term, modeled in equation (\ref{model}) by the term \(\theta_i\). This term is generated by an adversary with access to the data $((Y_i, X_i, \xi_i)_{i=1}^{n})$, to the vector $\beta^*$, as well as to the joint distribution of all the variables in the model.  The adversarial contamination framework has recently gathered increased attention compared to the classical Huber contamination model \citep{HuberRobustLocation1964}, as it more accurately captures the nature of modern security-sensitive applications by explicitly modeling the malicious intent of attackers through training-data poisoning. \citep{dalvi2004adversarial,shafahi2018poison,adversial_attacks_survey_2024}.  

This paper focuses on the case of dense linear regression, where the dimension $n$ is assumed to be significantly larger than $p$, and the vector $\beta^*$ has no particular structure. Moreover, we assume that the noise terms $(\xi_i)_{i=1}^{n}$ follow a heavy-tailed distribution. We aim to study the statistical properties of the Welsch estimator, for a well chosen value of $\tau$,
\begin{equation}
\label{estimator}
    \hat{\beta}:= \arg\min_{\beta \in \mathbf{R}^p } \frac{1}{ n} 
    \sum_{i=1}^n
    \frac{1}{\tau} \left( 1-\exp\left( \frac{-\tau \left(Y_i-X_i^T \beta \right)^2 }{2}\right)\right).
\end{equation}

\subsection{Notations}
\label{notation}
Absolute constants independent of any problem parameters will be denoted by symbols such as $C$, as well as $c$. By convention, capital $C$ will represent ``a sufficiently large absolute constant'', whereas lowercase $c$ will indicate ``a sufficiently small absolute constant''. \\ 
 We note \( \identite_p \) the identity matrix of size $p \times p$. For vectors $u,v$ in $\mathbf{R}^p$, the $\ell_2$- norm is defined as follows: $\lVert u \lVert^2:= \sum_{i=1}^n u_i^2$ and the corresponding scalar product $\langle u , v\rangle = \sum_{i=1}^n u_iv_i$. In the case of a matrix  $A \in \mathbf{R}^{n \times p }$ we note its spectral norm as follows: $\lVert A \lVert_{\infty}:= \sup_{\|x\| \leq 1} \|Ax\|$. Furthermore, the Moore–Penrose inverse of $A$, denoted by $A^\dag$ is defined such that $A^\dag$ satisfies the following property: For all $y$ in the span of the columns of $A$, we have that \(\mathbf{A}\mathbf{A}^\dag y = y\). The singular values of $A$ are noted as follows $(s_i(A))_{i \in [1, n]}$, such that $s_1(A) \geq \dots \geq s_n(A) $. Finally $(\lambda_i(A))_{i \in [1, n]}$ are the eigenvalues of the symmetric matrix $A$, also ordered in descending order. Finally, $O$ will refer to the support of the vector $\theta$ defined in  \eqref{model}, ie the subset of $\{ 1,\cdots,n\}$ for which $\theta_i \neq 0$. For two matrices of the same dimension,  $A ,B\in \mathbf{R}^{n \times p }$, the Hadamard product, denoted by $A\otimes B$ is defined as the element-wise product of the two matrices. We use $o$ to refer to $|O|= \text{Card}(O)$. In this work, we will also use notations $\left(\xrightarrow{d} \right)$ to denote convergence in distribution, $\left(\xrightarrow{a.s.}\right)$ for almost sure convergence and $\left(\xrightarrow{\mathbf{P}} \right)$ for convergence in probability.

\section{On the theoretical properties of the Welsch  estimator}
\label{sec:theory}
In this section, we investigate the statistical properties of the estimator defined in \eqref{estimator}. We first state the Assumptions under which our results are valid.

\begin{assumption}
\label{assmp1}
The outliers vector $\theta$ is $o$-sparse i.e. $\sum_{i=1}^n \mathds{1}(\theta_i \neq 0)\leq o$.
\end{assumption}

\begin{assumption}
\label{assmp2}
For \( i = 1, \ldots, n \), we suppose that \( X_1, \ldots, X_n \) are centered, isotropic and $1$-sub-Gaussian. Thus \( \mathbf{E}(X_1 X_1^\top) = \identite_p \). We say that a random vector $X \in \mathbf{R}^p$ is $1$-sub-Gaussian if and only if for all $u \in \mathbf{R}^p$ we have that $\mathbf{E}(\exp(\langle u , X \rangle)) \leq \exp(\|u\|^2/2)$.
\end{assumption}

\begin{assumption}
\label{assmp3}
 The noise components $(\xi_i)_{i=1,\dots,n}$ are \iid centered with unit variance. We assume that the distribution of the noise $P_\xi$ belongs to a set $\mathcal{P}_{\ell,a}$ for some $\ell \geq 2$ where:
\[ \mathcal{P}_{\ell,a} = \{P_\xi \text{ such that } \mathbf{E}(|\xi|^\ell) \leq a^\ell \}. \]
When $\ell = \infty$, we assume instead that $\mathbf{P}(|\xi| \geq t) \leq 2 \exp\left(-\frac{t^2}{2a^2}\right)$. We also suppose that we know $\ell$ or its lower bound and that $a$ is bounded by a sufficiently large constant.
\end{assumption}

\begin{remark}
    Assumption \ref{assmp3}  was used recently in \cite{minsker2024robust} and  is well-suited for handling  heavy-tailed noise. Observe that the class $\mathcal{P}_{2,1}$ is simply the class of centered noise with finite variance. 
    
\end{remark}

In what follows, we define the augmented set of outliers: 
\begin{equation}\label{assump4}
    O':=\left \{i : (Y_i-X_i^T\beta^*)^2 \geq \frac{1}{2 \tau} \right\}\cup O,
\end{equation}
where $\tau$ is the same temperature parameter used in the Welsch loss. The set $O'$ includes the original corrupted samples and the samples with large noise values, that we shall treat as outliers. We denote by $o':=|O'|$.

\subsection{On the loss landscape}\label{subsec:alpha}

\begin{figure}[!t]
    \centering
        \scalebox{1}{%
    \begin{tikzpicture}
        \begin{axis}[
            width=11cm,
            height=7cm,
            xlabel={$\beta_1$},
            ylabel={$f\left((.,\beta_2) \right)$},
            grid=both,
            major grid style={gray!50},
            minor grid style={gray!20},
            tick style={black},
            axis x line=bottom,
            axis y line=left,
            xmin=0, xmax=15,
            ymin=3, ymax=4.2,
            legend style={at={(0.5,-0.15)},anchor=north,legend columns=-1},
        grid=major,
        ]
        
        \addplot[
            thick, orange, smooth
        ] table[
            x=x, y=y,
            col sep=comma
        ] {csv_files/alpha_divergence_data.csv};
        
        \addplot[
            only marks, red, mark=*, mark size=2pt
        ] coordinates {(3.1641641641641645, 3.692167368780044)};
        \node[anchor=south] at (axis cs:3.5,3.62) {Local minimum};

\addplot [
    fill=green!50, fill opacity=0.5,
    cycle list name=exotic, 
    forget plot
] table [
    x=x, y=y,
    col sep=comma
] {csv_files/basin_of_attraction_data.csv} ;

\node[anchor=center, text=black, rounded corners] 
  at (axis cs:11.5,3.1) {Basin of attraction};
        
        \end{axis}
    \end{tikzpicture}}
    \caption{\small Landscape of the empirical Welsch loss function.}
    \label{min_loc_min_glob}
\end{figure}

To gain deeper insights into the properties of the Welsch estimator, we first analyze the structure of its loss landscape. Notably, the objective function to be minimized consists of a sum of non-convex terms, which introduces substantial optimization challenges. Figure \ref{min_loc_min_glob} illustrates the projection of the Welsch loss function onto a single coordinate when $p=2$ and data generated according to model \eqref{model}. Importantly, when large outliers are introduced, the region of local minima gets wider and shifts farther away from the global minimum, making global optimization more challenging. In other words, Figure \ref{min_loc_min_glob} highlights the risk of the optimization process becoming trapped in local minima, potentially leading to suboptimal estimates. To address this, we demonstrate that, with high probability, the objective function is strictly convex within a well-defined basin of attraction.

\begin{theorem} 
\label{conv}
Let the function $f(.)$ be defined as follows:
\begin{equation}
\label{function_f}
    f(\beta) := \frac{1}{\tau n} \sum_{i=1}^n \left(1 - \exp\left(-\frac{\tau}{2}(Y_i - X_i^T \beta)^2\right)\right),
\end{equation}
where $\beta \in \mathbf{R}^{p}$ is a parameter vector, and observations $(Y_1,X_1),\cdots (Y_n,X_n)$ satisfy Assumptions \ref{assmp1}-\ref{assmp3}. Then the function $f$ is strictly convex, with probability at least $1 - 2 \exp(-Dn/C^2)$, on the set $\mathscr{O}_{\tau}$
\begin{equation}
 \mathscr{O}_{\tau}:= \left\{ \beta \in \mathbf{R}^{p}, \frac{1}{n} \sum_{i=1}^n \mathds{1}_{\left\{ \exp\left( \frac{-\tau \left(Y_i - X_i^T \beta \right)^2 }{2} \right) \geq \exp(-\frac{1}{4})\right\} }  \geq D \right\},
\label{set_O}
\end{equation}
where the constant $D$ satisfies the inequality:
\begin{equation}
    \label{cond_D}
    p + 2o'\left(1 + \log\left(\frac{n}{2o'}\right)\right) < \frac{D n}{C^2},
\end{equation}
for a sufficiently large $C$. 

\end{theorem}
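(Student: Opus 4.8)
The plan is to compute the Hessian of $f$ and show it is positive definite on $\mathscr{O}_\tau$ with high probability. First I would differentiate twice: writing $r_i(\beta) := Y_i - X_i^T\beta$ and $w_i(\beta) := \exp(-\tau r_i(\beta)^2/2)$, a direct computation gives
\begin{equation*}
\nabla^2 f(\beta) = \frac{1}{n}\sum_{i=1}^n w_i(\beta)\bigl(1 - \tau r_i(\beta)^2\bigr)\, X_i X_i^T.
\end{equation*}
The coefficient $w_i(1-\tau r_i^2)$ is nonnegative exactly when $\tau r_i^2 \le 1$, and it is bounded below by a positive constant when $\tau r_i^2$ is bounded away from $1$ — which is precisely what the indicator in the definition of $\mathscr{O}_\tau$ enforces: $w_i(\beta) \ge e^{-1/4}$ is equivalent to $\tau r_i(\beta)^2 \le 1/2$, on which set $w_i(1-\tau r_i^2) \ge \tfrac12 e^{-1/4}$. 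So on $\mathscr{O}_\tau$ at least a fraction $D$ of the summands contribute a term $\gtrsim X_i X_i^T$, while the remaining summands contribute terms bounded below by $-c' X_i X_i^T$ for an absolute constant $c'$ (since $w_i(1-\tau r_i^2) \ge -\sup_{t}\, e^{-t^2/2}(t^2-1) =: -c'$, a finite absolute constant). Hence
\begin{equation*}
\nabla^2 f(\beta) \succeq \frac{c}{n}\sum_{i \in G(\beta)} X_i X_i^T - \frac{c'}{n}\sum_{i \notin G(\beta)} X_i X_i^T,
\end{equation*}
where $G(\beta)$ is the set of "good" indices with $|G(\beta)| \ge Dn$.

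The main obstacle is that $G(\beta)$ depends on $\beta$, so I cannot simply apply a matrix concentration bound for a fixed subset. The standard remedy is a uniform argument: I would show that, with probability $1 - 2\exp(-Dn/C^2)$, \emph{simultaneously over all subsets $S \subseteq \{1,\dots,n\}$ with $|S| \ge Dn$ and all $\beta$},
\begin{equation*}
\lambda_{\min}\!\Bigl(\frac{1}{n}\sum_{i \in S} X_i X_i^T\Bigr) \ \text{is large while}\ \Bigl\|\frac{1}{n}\sum_{i \notin S} X_i X_i^T\Bigr\|_\infty\ \text{is controlled},
\end{equation*}
balancing the two so that the good part dominates. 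This is where the counting term $2o'(1+\log(n/2o'))$ in condition \eqref{cond_D} enters: it is (up to constants) the log of $\binom{n}{o'}$, i.e. the union-bound cost of ranging over all possible "bad" sets of size up to $o'$, while the $p$ term is the cost of an $\varepsilon$-net over directions in $\mathbf{R}^p$ (and, implicitly, over $\beta$, since the relevant quantities depend on $\beta$ only through the residuals, which live in a $p$-dimensional affine family). Under Assumption \ref{assmp2} the matrix $\frac1n\sum_{i\in S}X_iX_i^T$ concentrates around $\frac{|S|}{n}\identite_p$ with sub-exponential fluctuations, by standard sub-Gaussian covariance estimates (Vershynin-type bounds), so for a net point the deviation probability is $\exp(-cn)$; multiplying by the net cardinality $e^{Cp}$ and the subset count $e^{C o'(1+\log(n/o'))}$ and invoking \eqref{cond_D} keeps the total failure probability below $2\exp(-Dn/C^2)$.

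Finally I would assemble the pieces: on the good event, for every $\beta \in \mathscr{O}_\tau$ the set $G(\beta)$ qualifies as one of the subsets $S$ covered by the uniform bound, so $\nabla^2 f(\beta) \succeq (c\,\kappa_1 - c'\kappa_2)\identite_p$ for explicit constants coming from the two concentration statements; choosing $C$ large enough in \eqref{cond_D} makes this strictly positive, giving strict convexity of $f$ on $\mathscr{O}_\tau$. I would be careful about two technical points: (i) handling the dependence on $\beta$ cleanly — one option is to note that $\mathds 1\{w_i(\beta)\ge e^{-1/4}\}$ and the Hessian summands depend on $\beta$ only through $X_i^T\beta$, so a net over a suitable compact region plus a Lipschitz/continuity argument suffices, or alternatively to prove the subset-uniform matrix bound first (which is $\beta$-free) and then quantify over $\beta$ only through which subset $G(\beta)$ is; and (ii) making sure $o'$ rather than $o$ is the right count — this is exactly why $O'$ was defined to absorb the large-noise samples, since for $i \notin O'$ the residual at $\beta^*$ is already small, but for the uniform-in-$\beta$ statement we still pay for all subsets of size $o'$.
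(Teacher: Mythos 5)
Your proposal follows essentially the same route as the paper's proof: the same Hessian formula $\nabla^2 f(\beta)=\frac1n\sum_i w_i(1-\tau r_i^2)X_iX_i^T$, the same good/bad split via the indicator $\tau r_i^2\le \tfrac12$ with the same pointwise constants ($\tfrac12 e^{-1/4}$ below on the good coefficients, $-2e^{-3/2}$ below on the bad ones), and the same subset-uniform eigenvalue concentration with a union bound whose combinatorial cost is exactly the $2o'\bigl(1+\log(n/(2o'))\bigr)$ term in \eqref{cond_D}. The one point of divergence is that you let the negative summands range over the full complement $S^c$ (of size up to $(1-D)n$), whereas the paper restricts them to sets of cardinality at most $2o'$ before applying $\max_{|U|\le 2o'}\lambda_{\max}(X_UX_U^T)$ --- a restriction your point (ii) gestures at and which is needed for the balance to close under \eqref{cond_D} without forcing $D$ to be a large absolute constant.
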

\begin{remark}
The condition imposed on $D$ (and equivalently on $C$) is required in order to ensure that the number of unperturbed (or clean) observations exceeds the number of outliers.
\end{remark}
Theorem \ref{conv} indicates that the  Welsch loss function is strictly convex on a well defined set $\mathscr{O}_{\tau}$. Examining the second-order derivative of the function $f$ defined in \eqref{function_f}, weights of the form $w_i(\beta):= \exp\left( \frac{-\tau \left(Y_i - X_i^T \beta \right)^2 }{2} \right)$ emerge, each corresponding to an individual observation. Consequently, the basin of attraction $\mathscr{O}_{\tau}$ is characterized by the vectors for which the weights $(w_i)$ exhibit a degree of uniformity, thereby minimizing the influence of a limited set of points that may be considered outliers. The only issue with the set $\mathscr{O}_{\tau}$ is that it is not necessarily convex and hence minimizing over it does not always lead to a global minimum. The next proposition will address this issue by restricting our attention to a subset that is convex.

\begin{proposition}
    \label{convex_set}
Let $c, \delta > 0$ be sufficiently small constants and $\ell\geq 2$. Consider the convex set : 
\[
\mathscr{J}_c := \{ \beta \in \mathbf{R}^p : \lVert \beta - \beta^* \rVert \leq c \}.
\]  
If $n \geq C_1(o + \log(1/\delta)+p)$ then $\mathscr{J}_c \subset \mathscr{O}_{\tau}$, with probability at least $1-\delta$, for the choice $\tau = C_2 \left(\frac{o + \log(1/\delta)}{n}\right)^{2/\ell}$ where $C_1,C_2$ are large constants. 
\end{proposition}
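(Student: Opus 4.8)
The plan is to translate membership in $\mathscr{O}_{\tau}$ into a counting inequality and then control that count uniformly over the ball $\mathscr{J}_c$. Since $\exp\!\big(-\tfrac{\tau}{2}(Y_i-X_i^T\beta)^2\big)\ge e^{-1/4}$ is equivalent to $(Y_i-X_i^T\beta)^2\le\tfrac{1}{2\tau}$, we have $\beta\in\mathscr{O}_{\tau}$ if and only if $\#\{i:(Y_i-X_i^T\beta)^2>\tfrac{1}{2\tau}\}\le(1-D)n$, so it suffices to show that with probability at least $1-\delta$,
\[
\sup_{\beta\in\mathscr{J}_c}\ \#\Big\{i:(Y_i-X_i^T\beta)^2>\tfrac{1}{2\tau}\Big\}\ \le\ (1-D)n .
\]
Writing $v:=\beta-\beta^*$ (so $\|v\|\le c$), using $Y_i-X_i^T\beta=\xi_i+\theta_i-X_i^Tv$ and $(a-b)^2\le 2a^2+2b^2$, one gets the deterministic implication: for $i\notin O$, if $\xi_i^2\le\tfrac{1}{8\tau}$ and $(X_i^Tv)^2\le\tfrac{1}{8\tau}$ then $(Y_i-X_i^T\beta)^2\le\tfrac{1}{2\tau}$. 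With $|O|\le o$ (Assumption \ref{assmp1}) and $(X_i^Tv)^2\le c^2(X_i^Tu)^2$ for $u=v/\|v\|$, this yields, uniformly over $\beta\in\mathscr{J}_c$,
\[
\#\Big\{i:(Y_i-X_i^T\beta)^2>\tfrac{1}{2\tau}\Big\}\ \le\ o+\#\Big\{i:\xi_i^2>\tfrac{1}{8\tau}\Big\}+\sup_{\|u\|=1}\#\Big\{i:|X_i^Tu|>t\Big\},\qquad t:=\tfrac{1}{\sqrt{8c^2\tau}} .
\]
It then remains to bound the noise term, bound the design term, and verify that the sum stays below $(1-D)n$.

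For the noise term, Markov's inequality under Assumption \ref{assmp3} gives $\mathbf{P}(\xi^2>\tfrac{1}{8\tau})\le(8\tau)^{\ell/2}a^\ell$ (and the stated sub-Gaussian tail when $\ell=\infty$), which for the prescribed $\tau\asymp\big((o+\log(1/\delta))/n\big)^{2/\ell}$ (a suitably small quantity, a small constant in the sub-Gaussian case) is at most $c'(o+\log(1/\delta))/n$ plus, when $\ell=\infty$, a small constant; a Chernoff/Bernstein bound for the corresponding sum of Bernoulli indicators then gives $\#\{i:\xi_i^2>\tfrac{1}{8\tau}\}\le \varepsilon n + C(o+\log(1/\delta))$ with probability at least $1-\delta/3$, for an arbitrarily small target $\varepsilon$ (by taking the constant in $\tau$ small). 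Running the same estimate with threshold $\tfrac{1}{2\tau}$ bounds the cardinality $o'$ of the augmented outlier set $O'$ of \eqref{assump4} by $\varepsilon n+C'(o+\log(1/\delta))$; this is what makes the constraint \eqref{cond_D} on $D$ compatible with $n\ge C_1(o+\log(1/\delta)+p)$, with $C_1$ large enough to absorb the factor $\log(n/2o')$ appearing there.

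The crux is the design term $\sup_{\|u\|=1}\#\{i:|X_i^Tu|>t\}$; since $u\mapsto\#\{i:|X_i^Tu|>t\}$ is discontinuous in $u$, a plain $\varepsilon$-net does not directly work. I would (i) bound $\max_i\|X_i\|\le R:=C(\sqrt{p}+\sqrt{\log n})$ on a high-probability event; (ii) take a $\rho$-net $\mathcal N$ of the unit sphere with $\rho=t/(2R)$, so $\log|\mathcal N|\lesssim p\log(R/t)$ and, for any $u$ with nearest net point $u_0$, $|X_i^Tu|>t$ forces $|X_i^Tu_0|>t/2$, whence $\sup_{\|u\|=1}\#\{i:|X_i^Tu|>t\}\le\max_{u_0\in\mathcal N}\#\{i:|X_i^Tu_0|>t/2\}$; (iii) for each fixed $u_0$, $\#\{i:|X_i^Tu_0|>t/2\}$ is a sum of Bernoullis of mean at most $2n\,e^{-t^2/8}$, and since $t^2=\Theta\!\big(c^{-2}(n/(o+\log(1/\delta)))^{2/\ell}\big)$ is large, this mean is at most $\varepsilon n/2$ once $c$ is small enough; a Chernoff bound and a union bound over $\mathcal N$ then yield $\sup_{\|u\|=1}\#\{i:|X_i^Tu|>t\}\le\varepsilon n+C(p+\log(1/\delta))$ (up to logarithmic factors in the $p$-term) with probability at least $1-\delta/3$. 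An equivalent route: if $m$ indices $i$ have $|X_i^Tu|>t$, then the column-submatrix $X_S$ with $|S|=m$ obeys $\|X_S\|_\infty^2\ge mt^2$, while a union bound over subsets gives $\sup_{|S|=m}\|X_S\|_\infty^2\lesssim m\log(en/m)+p$, forcing $m\lesssim p$ as soon as $t^2\gtrsim\log(en/m)$, which holds for $c$ small.

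On the intersection of the three events (probability at least $1-\delta$) we then have, uniformly over $\beta\in\mathscr{J}_c$, $\#\{i:(Y_i-X_i^T\beta)^2>\tfrac{1}{2\tau}\}\le 2\varepsilon n+C(o+p+\log(1/\delta))=:(1-D_0)n$, with $D_0\to1$ as $\varepsilon\to0$ and $n/(p+o+\log(1/\delta))\to\infty$; since \eqref{cond_D} only bounds $D$ from below by $C^2\big(p+2o'(1+\log(n/2o'))\big)/n$, which lies below $1-D_0$ once $C_1$ is large enough (using the bound on $o'$), every admissible $D\le D_0$ gives $\#\{i:(Y_i-X_i^T\beta)^2>\tfrac1{2\tau}\}\le(1-D)n$, i.e. $\mathscr{J}_c\subset\mathscr{O}_{\tau}$. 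I expect step (iii) — the uniform control of the design term — to be the main obstacle: the non-smoothness of the counting functional in $u$ forces the discretization to be coupled with the a-priori bound on $\max_i\|X_i\|$, and $\tau\asymp\big((o+\log(1/\delta))/n\big)^{2/\ell}$ must be tuned so that the threshold $t$ simultaneously kills the heavy-tailed noise tail and the sub-Gaussian design tail while keeping the sample-size requirement at $n\gtrsim p+o+\log(1/\delta)$ up to logarithmic factors.
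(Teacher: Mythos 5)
Your proposal is sound and reaches the stated conclusion, but it takes a genuinely different--and considerably heavier--route than the paper on the step you yourself flag as the crux. You split the residual as $2\xi_i^2+2(X_i^Tv)^2$ and demand that \emph{both} pieces stay below $\tfrac{1}{8\tau}$, which forces you to control $\sup_{\|u\|=1}\#\{i:|X_i^Tu|>t\}$ at a $\tau$-dependent threshold $t=1/\sqrt{8c^2\tau}$ via a net coupled with a bound on $\max_i\|X_i\|$, a Chernoff bound, and a union bound over the net (or over subsets). This works, at the price of logarithmic losses in the $p$-term and a delicate tuning of $t$ against both tails. The paper instead uses an asymmetric deterministic splitting: it shows $\mathds{1}_{\{a\le\frac{1}{6\tau}\}}\le\mathds{1}_{\{a+b\le\frac{1}{4\tau}\}}+\mathds{1}_{\{b\ge\frac{5}{4}\}}$ (valid once $\tfrac{1}{4\tau}\ge\tfrac{15}{4}$), so the design contribution only needs to be compared to the \emph{constant} $\tfrac54$ rather than to $\tfrac{1}{8\tau}$. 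The count of bad design indices is then dispatched by the elementary inequality $\mathds{1}_{\{a\ge b\}}\le a/b$, giving $\sum_i\mathds{1}_{\{(X_i^Tv)^2\ge 5/4\}}\le\tfrac45\|Xv\|^2\le\tfrac45\lambda_{\max}(XX^T)\,c^2\lesssim nc^2$, which is uniform over the whole ball $\mathscr{J}_c$ through a single operator-norm event--no net, no union bound, no $\max_i\|X_i\|$ control. The noise count is handled as in your sketch, via the order-statistics lemma behind Lemma~\ref{lem:o'}. What your approach buys is generality (it would survive if one insisted on a $\tau$-dependent design threshold); what the paper's buys is a one-line uniform bound and cleaner constants. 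One small caution: your remark that the constant in $\tau$ should be taken \emph{small} for the Markov/order-statistics step is the correct direction (larger $\tau$ inflates $o'$), even though the statement of the proposition advertises $C_2$ as large; this tension is inherited from the paper, not introduced by you.
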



By combining the results of Theorem \ref{conv} and Proposition \ref{convex_set}, we establish that the Welsch loss function is strictly convex over the convex set $\mathscr{J}_c$. Consequently, any convex optimization procedure used to solve \eqref{estimator} exhibits stable behavior and is guaranteed to converge, provided that the initialization is appropriately chosen, specifically, if the initial vector lies within $\mathscr{J}_c$. In our simulation study, we employ the ``Limited Memory Algorithm for Bound Constrained Optimization" as the optimization method, that we shall explain further in the next section. 

\begin{remark}
    Most of the popular robust estimators belong to the set $\mathscr{J}_c$. For instance, the Least Absolute Deviation (LAD) estimator, defined as
  \begin{equation*}
    \hat{\beta}_{\mbox{LAD}}:= \underset{\beta \in \mathbf{R}^{p}}{\arg\min} \sum_{i=1}^n |Y_i-X_i^T\beta |,
\end{equation*}
was analyzed in \cite{pensia2021robust}. It was established there that, with probability at least $1-\exp(-c n)$, the LAD estimator satisfies
$$
    \lVert \hat{\beta}_{\mbox{LAD}}-\beta^*\rVert\leq c,$$
    for some $c>0$. In practice, it is easy to find vectors in $\mathscr{J}_c$.
\end{remark}

In order to identify an effective estimator that can address the optimization problem due to the Welsch loss, we will employ a two-stage procedure. Initially, we compute the LAD estimator as a convex estimator. In the second part of the procedure, we solve the problem defined in  \eqref{estimator} using an optimization algorithm
that we initialize with the LAD estimator. Our methodology yields a good estimator of $\beta^*$ with optimal non-asymptotic and  asymptotic guarantees, as will be stated later. 

\begin{algorithm}
  \caption{ \small Robust $M$-estimation based on the Welsch loss.}
  \label{alpha_div_algo}
  \KwData{$\left( (X_1,Y_1), \cdots, (X_n,Y_n) \right), \tau, \beta_{0},c$}
  \While{$\mbox{median}((|Y_i - X^\top_i \beta_{k-1}|)_{i=1,\dots,n}) \geq c$}{

    \textbf{Compute a step towards the solution of the Least absolute deviation (LAD) estimator} \\
    $\beta_{k} \gets$ one step towards the solution of the problem 
 $\arg\min_{\beta \in \mathbf{R}^p }\sum_{i=1}^n |Y_i - X_i^\top \beta|$,\\ taking as initialization $\beta_{k-1}$ \\ 
 $k\gets k+1$
  }
   $\beta_{\textrm{LAD}} \gets \beta_{k}$\\
    \textbf{Compute the solution of \eqref{estimator} using  $\beta_{\textrm{LAD}}$ as initialization }\\
    $\hat{\beta} \gets$ The solution of \eqref{estimator} where $\beta_{\textrm{LAD}}$ is considered as the initialization for the optimization procedure. \\
  \Return{$\hat{\beta}$}
\end{algorithm}


From now on $\hat{\beta}$ will denote the output of Algorithm \ref{alpha_div_algo}. In practice, a quasi-Newton method is employed to solve the optimization problems of interest, specifically the ``Limited Memory Algorithm for Bound Constrained Optimisation'' (L-BFGS), which is a limited-memory method adapted to non-linear optimization problems with simple constraints on the variables. Despite its design for constrained optimization problems, this algorithm is also highly effective for unconstrained problems. This is due to the fact that it does not require explicit information about the Hessian matrix, which can be challenging or expensive to compute in the case of high-dimensional problems \citep{Limited_memory_B}. Furthermore, this method incorporates a memory-limited quasi-Newton update step to approximate the Hessian matrix, thereby ensuring that the memory requirement is linear in $n$ \citep{BFGS-B_Z}. The use of this approach for the calculation of the estimator is noteworthy, as it incorporates second-order information via the estimation of the Hessian, thereby enhancing the estimator's robustness to outliers. Additionally, the least absolute deviation estimator (LAD) is employed in the initial phase of the algorithm, as outlined in Algorithm \ref{alpha_div_algo}.

\subsection{Non-asymptotic optimality}\label{subsec:nonasymp}
In this section, we will analyze the two-stage procedure   outlined in Algorithm \ref{alpha_div_algo}.  We recall the reader  that well-known estimators, such as Huber and LAD, fall within the specified basin of attraction defined in $\mathscr{J}_c$ with high probability, thereby ensuring the convergence of the second phase of the algorithm to a global minimum.

\begin{theorem}
\label{deviation_bound_thm}
    Let $\tau$ be chosen such that $n \geq C(o'+p)$. If Assumptions \ref{assmp1},\ref{assmp2},\ref{assmp3} hold, then any solution $\hat{\beta}$ of \eqref{estimator} that belongs to $\mathcal{O}_\tau$,  satisfies the following deviation bound, with probability at least $1- \delta$:
\begin{align*}
        \lVert \hat{\beta}-\beta^*\rVert \leq  C_1 \left( \frac{1}{\sqrt{\tau}}\frac{2o'}{ n} \sqrt{\log\left(\frac{en}{2o'}\right)}+  \sqrt{\frac{p}{n}}+\sqrt{\frac{\log(1/\delta)}{n}} \right),
\end{align*}
where $C_1 >0$ is an absolute constant.
\end{theorem}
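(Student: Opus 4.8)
The plan is to analyze the first-order optimality condition of $\hat\beta$, namely $\nabla f(\hat\beta) = 0$, and combine it with the strict convexity established in Theorem~\ref{conv} (valid on $\mathscr{O}_\tau$, which contains $\hat\beta$ by hypothesis) to convert a bound on the gradient at $\beta^*$ into a bound on $\|\hat\beta - \beta^*\|$. Writing $r_i(\beta) := Y_i - X_i^\top\beta = \theta_i + \xi_i - X_i^\top(\beta - \beta^*)$, the gradient is $\nabla f(\beta) = -\frac{1}{n}\sum_i w_i(\beta)\, r_i(\beta)\, X_i$ with weights $w_i(\beta) = \exp(-\tfrac{\tau}{2} r_i(\beta)^2)$. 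First I would establish a restricted strong convexity / quadratic-growth statement: on $\mathscr{J}_c$ (hence on a neighborhood of $\hat\beta$ and $\beta^*$), the Hessian $\nabla^2 f$ has smallest eigenvalue bounded below by an absolute constant times $\frac{1}{n}\sum_i w_i$ over the "good" set, which by the definition of $\mathscr{O}_\tau$ and Proposition~\ref{convex_set} is $\gtrsim 1$ with high probability (using Assumption~\ref{assmp2} to control the design matrix restricted to clean samples). This gives $\|\hat\beta - \beta^*\| \lesssim \|\nabla f(\beta^*)\|$ after a standard argument (mean value theorem on $\nabla f$ along the segment, or directly convexity plus the optimality of $\hat\beta$).

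The bulk of the work is then bounding $\|\nabla f(\beta^*)\|$. I would split $\nabla f(\beta^*) = -\frac{1}{n}\sum_i w_i(\beta^*)(\theta_i + \xi_i) X_i$ into the contribution from $O'$ (corrupted samples plus samples with large noise, $r_i(\beta^*)^2 \ge \frac{1}{2\tau}$) and the contribution from the complement. For $i \in O'$, the key point is that the weight $w_i(\beta^*) = \exp(-\tfrac{\tau}{2} r_i^2)$ is multiplied by $r_i$, and the function $t \mapsto |t| e^{-\tau t^2/2}$ is uniformly bounded by $C/\sqrt{\tau}$; combined with a bound on $\|\sum_{i \in S} X_i\|$ uniform over all subsets $S$ of size $o'$ (a standard $\varepsilon$-net / VC-type argument over sparse selectors, Assumption~\ref{assmp2}), this yields the term $\frac{1}{\sqrt{\tau}}\cdot\frac{o'}{n}\sqrt{\log(en/o')}$. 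For $i \notin O'$, we have $w_i(\beta^*) \le 1$ and $\theta_i = 0$, so this is $\frac{1}{n}\sum_{i \notin O'} w_i(\beta^*)\xi_i X_i$, a sum of mean-zero-ish sub-Gaussian-vector terms (each coordinate is sub-exponential because $w_i \xi_i$ is bounded by $C/\sqrt\tau$ or controlled via Assumption~\ref{assmp3}), whose norm concentrates at $\sqrt{p/n} + \sqrt{\log(1/\delta)/n}$ by a vector Bernstein inequality. A small care point: $\mathbf{E}[w_i(\beta^*)\xi_i X_i]$ need not be exactly zero, but by symmetry/independence of $X_i$ from $\xi_i$ and the fact that $w_i(\beta^*)$ depends on $\xi_i$ only (not on $X_i$, at $\beta = \beta^*$), the conditional mean given $\xi_i$ vanishes, so these terms are genuinely centered.

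The main obstacle I anticipate is the uniform-over-sparse-subsets control needed for the $O'$ term: one must bound $\sup_{|S| \le o'} \|\frac{1}{n}\sum_{i \in S} c_i X_i\|$ where the coefficients $c_i = w_i(\beta^*) r_i(\beta^*)$ are themselves data-dependent (and $O'$ is random, depending on both $\theta$ and $\xi$ and through $\tau$), so the selection of $S$ is not independent of the $X_i$. The clean way is to bound $\sup_{|S|\le o'}\sup_{\|v\|\le C/\sqrt\tau,\, \mathrm{supp}(v)\subseteq S} \|\frac{1}{n}\sum_i v_i X_i\|$ by a chaining/net argument over the union of $\binom{n}{o'}$ coordinate subspaces, which produces the $\sqrt{o'\log(en/o')}$ factor; combined with the per-coordinate magnitude $C/\sqrt\tau$ this gives the stated rate. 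I would also need to verify $\hat\beta \in \mathscr{J}_c$ (not just $\mathscr{O}_\tau$) to use the constant lower bound on the Hessian — this follows from the two-stage algorithm initialized at LAD together with Theorem~\ref{conv} guaranteeing the iterates cannot escape, or alternatively one can run the quadratic-growth argument on $\mathscr{O}_\tau$ directly using that $\frac{1}{n}\sum_i w_i \ge c$ there by definition of the set. Finally, plugging the hypothesized scaling of $\tau$ (from Proposition~\ref{convex_set}, $\tau \asymp ((o+\log(1/\delta))/n)^{2/\ell}$) into the bound recovers the advertised minimax rates $\sqrt{p/n} + o/n\cdot(\log)^{1/2}$ for sub-Gaussian noise and $\sqrt{p/n} + \sqrt{o/n}$ for finite-variance noise, but that substitution is deferred to the corollary and not needed here.
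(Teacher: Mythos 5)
Your route differs from the paper's in one structural respect: you bound the gradient at the \emph{truth}, $\|\nabla f(\beta^*)\|$, and then try to convert this into a bound on $\|\hat\beta-\beta^*\|$ via strong convexity along the segment joining $\beta^*$ and $\hat\beta$. The paper instead pairs the stationarity equation $\nabla f(\hat\beta)=0$ directly with the direction $\hat\beta-\beta^*$, obtaining the basic inequality
\[
\frac{1}{n}(\hat\beta-\beta^*)^\top\!\!\sum_{i\in\mathrm{clean}} w_i(\hat\beta)X_iX_i^\top(\hat\beta-\beta^*)
\;\le\; \text{(noise term)} + \text{(outlier term)},
\]
and lower-bounds the left side by $c\,\|\hat\beta-\beta^*\|^2$ using only that $\hat\beta\in\mathscr{O}_\tau$ (so at least $Dn$ weights $w_i(\hat\beta)$ exceed $e^{-1/4}$) together with a restricted minimum-eigenvalue bound over all subsets of size $\ge Dn$. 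No property of $f$ between $\beta^*$ and $\hat\beta$ is ever invoked. Your conversion step, by contrast, needs the integrated Hessian $\int_0^1\nabla^2 f(\beta^*+t(\hat\beta-\beta^*))\,dt$ to be bounded below, hence the \emph{entire segment} must lie in the region where Theorem~\ref{conv} applies. The hypothesis of the theorem only places $\hat\beta$ in $\mathscr{O}_\tau$, which is not convex, so the segment can leave it; your fallback (``run the quadratic-growth argument on $\mathscr{O}_\tau$ directly'') is not worked out and does not obviously repair this, since the defining property of $\mathscr{O}_\tau$ controls the Hessian at a point, not along a path. This is the one genuine gap: as written, your argument proves the bound only for stationary points in $\mathscr{J}_c$ (or with the segment contained in the good region), which is a strictly stronger hypothesis than the theorem's.

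The remainder of your proposal is sound and closely parallels the paper's estimates, just evaluated at $\beta^*$ instead of $\hat\beta$: your bound $|t|e^{-\tau t^2/2}\le C/\sqrt{\tau}$ on the weighted residuals of the $O'$ samples is the same elementary inequality the paper uses (there in the form $xe^{-x}\le 1/e$ applied to $\tau(\theta_i+X_i^\top(\beta^*-\hat\beta))^2$, yielding the $\sqrt{2o'/(\tau e)}$ factor); your union bound over $\binom{n}{o'}$ coordinate subspaces to handle the data-dependent selection of $O'$ is exactly the paper's device and produces the same $\sqrt{\log(en/(2o'))}$ factor; and your observation that $w_i(\beta^*)\xi_iX_i$ is centered because $w_i(\beta^*)$ depends only on $\xi_i$ is correct (the paper instead projects $\xi$ onto the column space of $X$ and controls $\|\pi\xi\|$, but both give $\sqrt{p/n}+\sqrt{\log(1/\delta)/n}$). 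One smaller point you should still address on your route: the summation set $\{i\notin O'\}$ is itself adversarially chosen (the complement of $O$ may depend on $X$), so the clean sum is not a sum over a fixed index set of centered terms; the standard fix is to write it as the full i.i.d.\ sum minus another sparse-subset correction, absorbed into the $O'$ term.
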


The bound derived in Theorem \ref{deviation_bound_thm} highlights an important trade-off when selecting the parameter $\tau$. Specifically, $\tau$ should be chosen to be small enough (and hence $o'$ large enough) to facilitate effective data filtration, which means that the `outliers' set should primarily capture genuinely contaminated observations while minimizing the inclusion of clean data. However, if $\tau$ is too small, the resulting bound may become excessively large, undermining the estimator's performance. In practice, this hyperparameter is typically determined by cross-validation. The next proposition provides the optimal choice of $\tau$ in theory. 
\begin{proposition}
\label{born}
Let $n \geq C_1(o + \log(1/\delta)+p)$. Setting $\tau = C\left(\frac{o + \log(1/\delta)}{n}\right)^{2/\ell}$, and using Assumptions \ref{assmp1},\ref{assmp2},\ref{assmp3}, then, with probability at least $1-\delta$, Algorithm \ref{alpha_div_algo} converges to a unique $\hat{\beta}$ and we get that
$$
 \lVert \hat{\beta}-\beta^*\rVert \leq  C_1 \left( \left(\frac{o}{ n} \right)^{1-1/\ell}\sqrt{\log\left(\frac{en}{2o}\right)}+  \sqrt{\frac{p}{n}}+\sqrt{\frac{\log(1/\delta)}{n}\log\left(\frac{en}{2\log(1/\delta)}\right)} \right).
 $$
\end{proposition}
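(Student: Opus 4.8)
\textbf{Proof proposal for Proposition \ref{born}.}

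The plan is to specialize Theorem \ref{deviation_bound_thm} and Proposition \ref{convex_set} to the stated choice of $\tau$, then bound $o'$ in terms of $o$. First I would invoke Proposition \ref{convex_set}: with $\tau = C_2\left(\frac{o+\log(1/\delta)}{n}\right)^{2/\ell}$ and $n \geq C_1(o+\log(1/\delta)+p)$, we have $\mathscr{J}_c \subset \mathscr{O}_\tau$ with probability at least $1-\delta$. Since the LAD estimator lies in $\mathscr{J}_c$ with high probability (by the cited result of \cite{pensia2021robust}), the L-BFGS phase of Algorithm \ref{alpha_div_algo} is initialized inside $\mathscr{O}_\tau$, and by Theorem \ref{conv} the objective $f$ is strictly convex there, so the algorithm converges to a unique $\hat\beta \in \mathscr{J}_c \subset \mathcal{O}_\tau$. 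This justifies applying Theorem \ref{deviation_bound_thm} to this $\hat\beta$.

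Next I would control $o' = |O'|$. Recall $O' = \{i : (Y_i - X_i^T\beta^*)^2 \geq \frac{1}{2\tau}\} \cup O = \{i : \xi_i^2 \geq \frac{1}{2\tau}\} \cup O$ (using $Y_i - X_i^T\beta^* = \theta_i + \xi_i$; one needs to be slightly careful on $O$ itself, but outside $O$ the residual is exactly $\xi_i$). So $o' \leq o + |\{i : |\xi_i| \geq (2\tau)^{-1/2}\}|$. Under Assumption \ref{assmp3}, $\mathbf{P}(|\xi_i| \geq t) \leq a^\ell t^{-\ell}$ by Markov, so each clean index enters the extra set with probability at most $a^\ell (2\tau)^{\ell/2} = a^\ell 2^{\ell/2} C_2^{\ell/2}\frac{o+\log(1/\delta)}{n}$. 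Hence the expected number of such indices is $\lesssim o + \log(1/\delta)$, and a Bernstein/Chernoff bound for this sum of Bernoulli variables gives, with probability at least $1-\delta$, that $|\{i : |\xi_i| \geq (2\tau)^{-1/2}\}| \leq C_3(o + \log(1/\delta))$, hence $o' \leq C_4(o + \log(1/\delta))$. (In the sub-Gaussian case $\ell = \infty$ one argues analogously with the Gaussian-type tail, and the bound is even sharper.) One also checks $n \geq C(o'+p)$ holds under the stated hypothesis so Theorem \ref{deviation_bound_thm} applies.

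Finally I would substitute into the bound of Theorem \ref{deviation_bound_thm}. The first term is $\frac{1}{\sqrt\tau}\frac{2o'}{n}\sqrt{\log(en/2o')}$; plugging $\frac{1}{\sqrt\tau} = C_2^{-1/2}\left(\frac{n}{o+\log(1/\delta)}\right)^{1/\ell}$ and $o' \asymp o + \log(1/\delta)$ gives a term of order $\left(\frac{o+\log(1/\delta)}{n}\right)^{1-1/\ell}\sqrt{\log(en/2o')}$, which after splitting $o+\log(1/\delta)$ into its two pieces and using $\sqrt{\log(en/2o')} \lesssim \sqrt{\log(en/2o)}$ (resp. $\sqrt{\log(en/2\log(1/\delta))}$) yields the first and (part of the) third displayed terms; the $\sqrt{p/n}$ term passes through unchanged, and the $\sqrt{\log(1/\delta)/n}$ term from Theorem \ref{deviation_bound_thm} is absorbed into the stated $\sqrt{\frac{\log(1/\delta)}{n}\log(en/2\log(1/\delta))}$. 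Collecting constants gives the claim. The main obstacle is the high-probability bound on $o'$: one must handle both the polynomial-tail ($\ell < \infty$) and sub-Gaussian ($\ell = \infty$) regimes uniformly, and ensure the concentration of $\sum_i \mathds{1}(|\xi_i| \geq (2\tau)^{-1/2})$ around its mean is tight enough not to lose logarithmic factors beyond those already present in the statement; everything else is bookkeeping on top of the two quoted results.
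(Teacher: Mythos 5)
Your proposal follows the paper's own proof essentially step for step: invoke Proposition \ref{convex_set} together with the strict convexity from Theorem \ref{conv} to get uniqueness and membership in $\mathcal{O}_\tau$, bound $o'$ by $C(o+\log(1/\delta))$ with probability $1-\delta$, and then substitute the chosen $\tau$ into Theorem \ref{deviation_bound_thm}. The only (immaterial) difference is that you re-derive the $o'$ bound via Markov plus Bernoulli concentration, whereas the paper delegates it to its Lemma \ref{lem:o'}, which argues through the order statistics of the noise via Lemma \ref{noise_mu}; both yield the same estimate.
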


\begin{remark}
    Algorithm 1 can also handle adversarial contamination in the covariates by incorporating the iterative filtering procedure of \cite{diakonikolas2019recent}. After filtering, the design matrix satisfies weak stability, as in \citet{pensia2021robust}, and all our guarantees, including Proposition \ref{born}, remain valid.
\end{remark}
    The bound in Proposition \ref{born} can be interpreted as follows. The term $\sqrt{\frac{p}{n}} $ corresponds to the parametric rate for estimating a vector of dimension $p$ given $n$ observations. The term $\sqrt{\frac{\log(1/\delta)}{n}} $ corresponds to sub-Gaussian deviations in order to get a bound with probability $1-\delta$. Finally, the term $\left(\frac{o}{ n} \right)^{1-1/\ell}\sqrt{\log\left(\frac{en}{2o}\right)}$ corresponds  to the contamination term. This measures the impact of outliers on the estimation error, capturing how their presence affects the algorithm's robustness.
    This contamination term is minimax optimal as proved in \cite{minsker2024robust}. The extra logarithmic factor, under heavy tails, results from the Assumption that the outliers are generated by an adversary who has access to the entire dataset and to the joint distribution of all variables under model \eqref{model}. Hence Proposition \ref{born} shows that the Welsch estimator is a sub-Gaussian estimator and moreover is minimax optimal. It now remains to show that the latter estimator is superior compared to other robust methods such that Huber's estimator for instance. The next theorem  claims that, given large outliers, the  Welsch estimator discards outlier's contribution.

    \begin{theorem}\label{debiais}
       Let $n \geq C_1(o + \log(1/\delta)+p)$. Setting $\tau = C\frac{\log(1/\delta)}{n}$ and using the same Assumptions in Theorem \ref{deviation_bound_thm}, where we assume further that $\sqrt{\tau}\underset{i \in O}{\min} |\theta_i| \geq C_1(\sqrt{p} + \sqrt{\log(n)} +\sqrt{\log(1/\delta)})$. Then, with probability at least $1-\delta$, we get that
$$
 \lVert \hat{\beta}-\beta^*\rVert \leq  C_2 \left(  \sqrt{\frac{p}{n}}+\sqrt{\frac{\log(1/\delta)}{n}} \right).
 $$
    \end{theorem}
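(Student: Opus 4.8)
The starting point is the first-order optimality condition at $\hat\beta$, which (exactly as in the proof of Theorem \ref{deviation_bound_thm}) gives
\begin{equation*}
\frac{1}{n}(\hat\beta-\beta^*)^\top \sum_{i} w_i(\hat\beta)\, X_i X_i^\top (\hat\beta-\beta^*)
= \frac{1}{n}\sum_{i} w_i(\hat\beta)\, X_i^\top(\hat\beta-\beta^*)\,(\xi_i+\theta_i),
\end{equation*}
where $w_i(\beta)=\exp(-\tfrac{\tau}{2}(Y_i-X_i^\top\beta)^2)$. The plan is to split both sides over the clean set $O'^c$ and the augmented outlier set $O'$, and to exploit the fact that with the choice $\tau = C\log(1/\delta)/n$ the hypothesis $\sqrt{\tau}\min_{i\in O}|\theta_i|\geq C_1(\sqrt p+\sqrt{\log n}+\sqrt{\log(1/\delta)})$ forces the weights $w_i(\hat\beta)$ to be \emph{exponentially small} for every genuinely corrupted index $i\in O$. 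Concretely, on the event that $\hat\beta\in\mathscr{J}_c$ (guaranteed w.h.p.\ by Proposition \ref{convex_set} together with the LAD initialization, as in Proposition \ref{born}) and on the standard sub-Gaussian events controlling $\|X_i\|$ and $|\xi_i|$, one has $|Y_i-X_i^\top\hat\beta|\geq |\theta_i|-|X_i^\top(\hat\beta-\beta^*)|-|\xi_i| \gtrsim |\theta_i|$ for $i\in O$, whence $w_i(\hat\beta)\leq \exp(-c\tau\theta_i^2)$ is smaller than any polynomial factor we need. This is the crux: the large-outlier assumption is exactly calibrated so that the outlier contribution is killed.

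With the outlier weights negligible, the analysis reduces to the same quantities that appear in Theorem \ref{deviation_bound_thm} \emph{but restricted to $O'$ replaced by only the "large-noise" part of $O'$}, i.e.\ indices where the noise is atypically large but no adversarial shift is present. I would then (i) lower-bound the left-hand quadratic form: on $\mathscr{O}_\tau$ (which contains $\mathscr{J}_c$ by Proposition \ref{convex_set}) a constant fraction of weights are bounded below, so $\frac1n\sum_i w_i(\hat\beta)X_iX_i^\top \succeq \kappa\, \identite_p$ for some $\kappa>0$ with probability $1-2e^{-Dn/C^2}$, giving a lower bound $\kappa\|\hat\beta-\beta^*\|^2$; and (ii) upper-bound the right-hand side. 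For the RHS, after discarding $O$, the remaining "noise" indices contribute a term of order $\|\hat\beta-\beta^*\|\cdot\tfrac1n\big\|\sum_{i\notin O} w_i(\hat\beta)X_i\xi_i\big\|$, which is handled by the same empirical-process / chaining bound used for Theorem \ref{deviation_bound_thm} and yields $\lesssim \|\hat\beta-\beta^*\|(\sqrt{p/n}+\sqrt{\log(1/\delta)/n})$; crucially there is now \emph{no} $(o/n)^{1-1/\ell}$ contamination term because every index in $O$ has been zeroed out by its weight. Dividing through by $\kappa\|\hat\beta-\beta^*\|$ gives the claimed bound.

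A few points require care and constitute the main obstacles. First, $w_i(\hat\beta)$ depends on $\hat\beta$, so the "small weight on $O$" argument must be made uniformly over $\beta\in\mathscr{J}_c$; I would take a union bound / net argument over $\mathscr{J}_c$ exactly as in the earlier proofs, using that $\beta\mapsto Y_i-X_i^\top\beta$ is Lipschitz in $\beta$ with constant $\|X_i\|$ and that $\|X_i\|\lesssim\sqrt p$ on a high-probability event. Second, one must verify that the threshold defining $O'$, namely $(Y_i-X_i^\top\beta^*)^2\geq 1/(2\tau)$ with $\tau\asymp\log(1/\delta)/n$, is consistent with the bookkeeping: here $1/(2\tau)\asymp n/\log(1/\delta)$ is large, so $o'$ is essentially $o$ plus a handful of extreme-noise indices — and the constant $C_1$ in $\sqrt{\tau}\min_{i\in O}|\theta_i|\geq C_1(\cdots)$ must be chosen large enough to dominate both the $\|X_i^\top(\hat\beta-\beta^*)\|\lesssim\sqrt p\cdot c$ term and the deviation terms $\sqrt{\log n}$ (union bound over the $n$ indices) and $\sqrt{\log(1/\delta)}$. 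The hardest technical step is making the RHS empirical-process bound $\tfrac1n\|\sum_{i\notin O} w_i(\hat\beta)X_i\xi_i\|\lesssim \sqrt{p/n}+\sqrt{\log(1/\delta)/n}$ uniform in $\hat\beta$ while the data-dependent weights $w_i(\hat\beta)\in[0,1]$ vary — but since $0\le w_i\le 1$ these weights can be absorbed by a standard contraction/symmetrization argument, reducing it to the unweighted bound already invoked in Theorem \ref{deviation_bound_thm}.
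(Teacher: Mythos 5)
Your proposal follows essentially the same route as the paper: both revisit the outlier contribution from the proof of Theorem \ref{deviation_bound_thm} and use the hypothesis $\sqrt{\tau}\min_{i\in O}|\theta_i|\gtrsim \sqrt{p}+\sqrt{\log n}+\sqrt{\log(1/\delta)}$ to show that $|\theta_i+X_i^\top(\hat\beta-\beta^*)|\geq \min_{i\in O}|\theta_i|/2$ on $\mathscr{J}_c$, so that the weighted outlier term (the quantity $x e^{-x}$ at $x=\tau(\theta_i+X_i^\top(\beta^*-\hat\beta))^2\geq 1$, where $xe^{-x}$ is decreasing) becomes exponentially small and is absorbed into $C\sqrt{p/n}$, while the restricted-eigenvalue lower bound and the clean-noise bound are reused unchanged. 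The only cosmetic difference is that the paper obtains uniformity over $\mathscr{J}_c$ directly from $\max_i\|X_i\|\lesssim \sqrt{p}+\sqrt{\log n}+\sqrt{\log(1/\delta)}$ and $\|\hat\beta-\beta^*\|\leq c$ rather than via a net or contraction argument.
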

The condition on the outliers magnitude 
in Theorem \ref{debiais} is not optimal and can be relaxed, but it gives a practical illustration of the fact that the Welsch estimator is able to estimate the regression vector $\beta^*$ without the outliers contribution whenever these are too large. To some extent, this procedure adapts to cases where outliers are too large and removes the corresponding bias whenever possible. 


\subsection{Asymptotic efficiency}\label{subsec:asymp}
 After claiming that the  Welsch estimator is non-asymptotically minimax optimal, the objective of this section is to show that the same estimator is also asymptotically efficient. Since we now consider the case where $n$ tends to infinity, it is no longer appropriate to assume the presence of outliers in the observations. Consequently,  we set $\theta=0$ in model \eqref{model}. Theorem  \ref{assymp}, next, establishes the asymptotic efficiency of the estimator given by Algorithm \ref{alpha_div_algo} for a suitably chosen $\tau_n$. 

\begin{theorem}
\label{assymp}
Let the parameter $\tau_n$ in the definition of estimator \eqref{estimator} be chosen as $\tau_n:=\frac{u_n}{n}$, where $u_n$ is a sequence that slowly goes to infinity as $n$ increases. Under the same Assumptions as in Theorem \ref{deviation_bound_thm}  and this choice of $\tau_n$, the estimator given by Algorithm \ref{alpha_div_algo} satisfies the following asymptotic normality result:
\[
\sqrt{n}(\hat{\beta} - \beta^*) \xrightarrow{d} \mathcal{N}(0, \identite_p).
\]
    
\end{theorem}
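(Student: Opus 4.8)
The plan is to derive asymptotic normality from a first-order (stationarity) expansion of the Welsch objective, exactly as in the classical theory of $M$-estimators, but carefully tracking the dependence on $\tau_n \to 0$. Since $\hat\beta$ is a stationary point lying in $\mathscr{J}_c$ (by Algorithm \ref{alpha_div_algo} and Proposition \ref{convex_set}, once $\theta=0$ the consistency $\hat\beta \to \beta^*$ follows from Theorem \ref{deviation_bound_thm} with $o'$ now just the count of samples with $(Y_i-X_i^\top\beta^*)^2 \geq \tfrac{1}{2\tau_n}$, which is $o_P(n)$ because $\tau_n \to 0$ slowly), we may write $\nabla f(\hat\beta)=0$ and Taylor-expand $\nabla f$ around $\beta^*$:
\[
0 = \nabla f(\beta^*) + \nabla^2 f(\tilde\beta)\,(\hat\beta - \beta^*),
\]
for some $\tilde\beta$ on the segment between $\hat\beta$ and $\beta^*$. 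Here the gradient has entries built from $\psi_{\tau_n}(r_i) := r_i \exp(-\ttfrac{\tau_n}{2} r_i^2)$ with $r_i = Y_i - X_i^\top\beta^* = \xi_i$, so that
\[
\nabla f(\beta^*) = -\frac{1}{n}\sum_{i=1}^n \xi_i \exp\!\left(-\frac{\tau_n}{2}\xi_i^2\right) X_i .
\]
Rearranging gives $\sqrt{n}(\hat\beta-\beta^*) = \big(\nabla^2 f(\tilde\beta)\big)^{-1}\,\sqrt{n}\,\nabla f(\beta^*)$ times $-1$, and the proof reduces to two limit statements.

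\textbf{Step 1: the Hessian converges to $\identite_p$.} The Hessian is $\nabla^2 f(\beta) = \frac1n\sum_i \big(1 - \tau_n r_i^2\big)\exp(-\tfrac{\tau_n}{2}r_i^2)\,X_iX_i^\top$ with $r_i = Y_i-X_i^\top\beta$. Evaluated at $\beta^*$, since $\tau_n \to 0$, the multiplier $(1-\tau_n\xi_i^2)\exp(-\tfrac{\tau_n}{2}\xi_i^2) \to 1$ for each fixed $i$; a uniform integrability / truncation argument using $\mathbf{E}[\xi^2]=1$ (Assumption \ref{assmp3}) and a law of large numbers for the $1$-sub-Gaussian design (Assumption \ref{assmp2}, giving $\frac1n\sum_i X_iX_i^\top \xrightarrow{\mathbf{P}} \identite_p$) yields $\nabla^2 f(\beta^*) \xrightarrow{\mathbf{P}} \identite_p$. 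One then upgrades this to $\nabla^2 f(\tilde\beta) \xrightarrow{\mathbf{P}} \identite_p$ using $\|\tilde\beta - \beta^*\| \leq \|\hat\beta-\beta^*\| = o_P(1)$ together with a local Lipschitz/stochastic-equicontinuity bound on $\beta \mapsto \nabla^2 f(\beta)$ over $\mathscr{J}_c$ (the weights $w_i$ and their derivatives are uniformly bounded in terms of $\tau_n$, which helps).

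\textbf{Step 2: the score is asymptotically $\mathcal{N}(0,\identite_p)$.} We must show $\sqrt{n}\,\nabla f(\beta^*) = \frac{1}{\sqrt n}\sum_i \xi_i \exp(-\tfrac{\tau_n}{2}\xi_i^2) X_i \xrightarrow{d} \mathcal{N}(0,\identite_p)$. This is a triangular-array CLT (Lindeberg or Lyapunov): the summands $Z_{n,i} := \xi_i e^{-\tau_n\xi_i^2/2} X_i$ are i.i.d. within each row, mean zero (by independence of $\xi_i$ and $X_i$ and $\mathbf{E}[\xi_i e^{-\tau_n\xi_i^2/2}] $ need not vanish — but wait, it does not; instead one centers, and since $X_i$ is independent and centered, $\mathbf{E}[Z_{n,i}] = \mathbf{E}[\xi_i e^{-\tau_n\xi_i^2/2}]\,\mathbf{E}[X_i] = 0$). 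Their covariance is $\mathbf{E}[\xi_1^2 e^{-\tau_n\xi_1^2}]\,\identite_p \to \mathbf{E}[\xi_1^2]\,\identite_p = \identite_p$ by dominated convergence, again using $\mathbf{E}[\xi^2]=1$. The Lindeberg condition follows because $\mathbf{E}\|Z_{n,1}\|^{2} = \mathbf{E}[\xi_1^2 e^{-\tau_n\xi_1^2}]\,\mathbf{E}\|X_1\|^2 = O(p)$ is bounded (for fixed $p$) and the truncated second moments vanish; the sub-Gaussian tails of $X_1$ make the vector CLT (via the Cramér–Wold device) routine. Combining Steps 1 and 2 through Slutsky gives $\sqrt{n}(\hat\beta-\beta^*) = (\identite_p + o_P(1))^{-1}\big(\mathcal{N}(0,\identite_p) + o_P(1)\big) \xrightarrow{d} \mathcal{N}(0,\identite_p)$.

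\textbf{Main obstacle.} The delicate point is handling the interplay between $\tau_n \to 0$ and the heavy-tailed noise: under Assumption \ref{assmp3} with only $\ell$ finite moments, the naive quantities $\mathbf{E}[\xi^2 e^{-\tau_n \xi^2}]$ are fine (the exponential weight ensures boundedness uniformly in $\tau_n$), but the consistency input and the uniform control of $\nabla^2 f$ over $\mathscr{J}_c$ require that the number of ``effective outliers'' $o' = \#\{i : \xi_i^2 \geq \tfrac{1}{2\tau_n}\}$ be $o(n)$ — this is where we need $u_n = n\tau_n \to \infty$, so that $\tfrac{1}{2\tau_n} = o(n/u_n)\to\infty$ and $\mathbf{P}(\xi^2 \geq \tfrac{1}{2\tau_n}) \to 0$, hence $o'/n \to 0$ by the weak law. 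Making the rate $u_n$ ``slow enough'' precise so that simultaneously (i) $\tau_n^{-1}\to\infty$ fast enough to kill the bias/contamination term in Theorem \ref{deviation_bound_thm}, and (ii) $\tau_n$ small enough that the weight multipliers concentrate at $1$, is the crux; this is exactly the regime $\tau_n = u_n/n$ with $u_n\to\infty$, $u_n/n\to 0$, and the argument should show any such sequence works.
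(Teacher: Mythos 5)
Your proposal is correct in outline and reaches the result, but it follows a genuinely different route from the paper. The paper never Taylor-expands the gradient: it starts from the exact first-order identity $\frac{1}{n}\sum_i w_i(\hat{\beta})X_iX_i^\top(\hat{\beta}-\beta^*)=\frac{1}{n}\sum_i w_i(\hat{\beta})X_i\xi_i$, with all weights evaluated at $\hat{\beta}$, then adds and subtracts $1$ from each weight. The limiting matrix is obtained from $\frac{1}{n}\sum_i X_iX_i^\top\xrightarrow{a.s.}\identite_p$, the Gaussian limit from the ordinary CLT applied to the \emph{unweighted} score $\frac{1}{\sqrt{n}}\sum_i X_i\xi_i$, and both remainders are killed by a Taylor expansion of $\beta\mapsto w_i(\beta)$ around $\beta^*$ together with a uniform bound $\max_i|w_i(\hat{\beta})-1|\le C\tau_n\log n$ (fed by Theorem \ref{deviation_bound_thm} with $\delta=1/n$). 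You instead keep the weights at $\beta^*$ in the score and invoke a Lindeberg triangular-array CLT with variance $\mathbf{E}[\xi^2e^{-\tau_n\xi^2}]\to 1$ by dominated convergence, and you evaluate the true Hessian at an intermediate point. Your route has the advantage that all weight control is done in expectation rather than via a $\max_i$ bound, which sits more comfortably with the heavy-tailed noise of Assumption \ref{assmp3} (the paper's bound $\max_i|w_i(\hat{\beta})-1|\le\tau_n\log n$ tacitly uses light-tailed behavior of $\max_i\xi_i^2$); the paper's route avoids the triangular-array machinery and the intermediate-point Hessian entirely. Two points you should tighten: (i) the mean-value form $0=\nabla f(\beta^*)+\nabla^2 f(\tilde{\beta})(\hat{\beta}-\beta^*)$ is not valid for a vector-valued gradient with a single $\tilde{\beta}$; use the integral form $\nabla f(\hat{\beta})-\nabla f(\beta^*)=\bigl(\int_0^1\nabla^2 f(\beta^*+t(\hat{\beta}-\beta^*))\,dt\bigr)(\hat{\beta}-\beta^*)$, which changes nothing downstream since your Step 1 argument is uniform over $\mathscr{J}_c$; (ii) the ``stochastic equicontinuity'' in Step 1 should be made concrete — it suffices to note that the Hessian multiplier satisfies $|(1-\tau_n r_i^2)e^{-\tau_n r_i^2/2}-1|\le C\min(\tau_n r_i^2,1)$ and that $\mathbf{E}[\min(\tau_n\xi^2,1)\,\|X\|^2]=p\,\mathbf{E}[\min(\tau_n\xi^2,1)]\to 0$ by dominated convergence, uniformly over the shrinking neighborhood.
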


\section{Numerical experiments}\label{sec:simu}
In this section, we present some empirical results obtained using the methodology outlined in this article. We begin by discussing the results of experiments conducted on simulated data and then present results derived from a real-world dataset.
\subsection{Simulated data}
The experiments were conducted on simulated data generated according to the model described in \eqref{model}. To incorporate heavy-tailed noise into the data, we modeled $\xi$ as a centered Pareto-distributed random variable.

\begin{figure}[!t]
    \centering
    \includegraphics[width=10cm, height = 6cm]{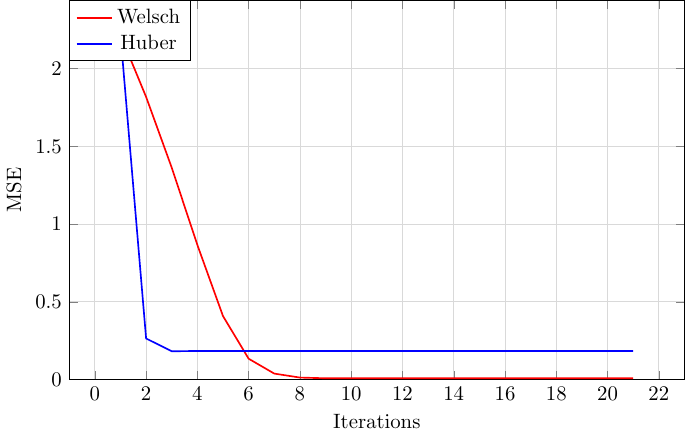}
    \caption{\small Comparison of the speed of convergence between Huber and Welsch estimators.}
    \label{CV_speed} 
\end{figure}

We first compare the convergence speed of our method to that of the well-established Huber-based algorithm, as shown in Figure \ref{CV_speed}. For both approaches, the optimization process is initialized within the basin of attraction  using the LAD estimator, and gradient descent is employed to iteratively minimize the objective function. At each iteration, we assess the deviation between the true parameter $\beta^*$  and its estimator. In this experiment, we use simply gradient descent rather than the L-BFGS algorithm, given that the initialization is already within the basin of attraction. While the Huber estimator exhibits slightly faster convergence, the Welsch estimator achieves a lower 
$\ell_2$  estimation error within a relatively small number of iterations, highlighting its robustness and accuracy.

\begin{figure}[!t]
    \centering
\includegraphics[width=10cm, height = 6cm]{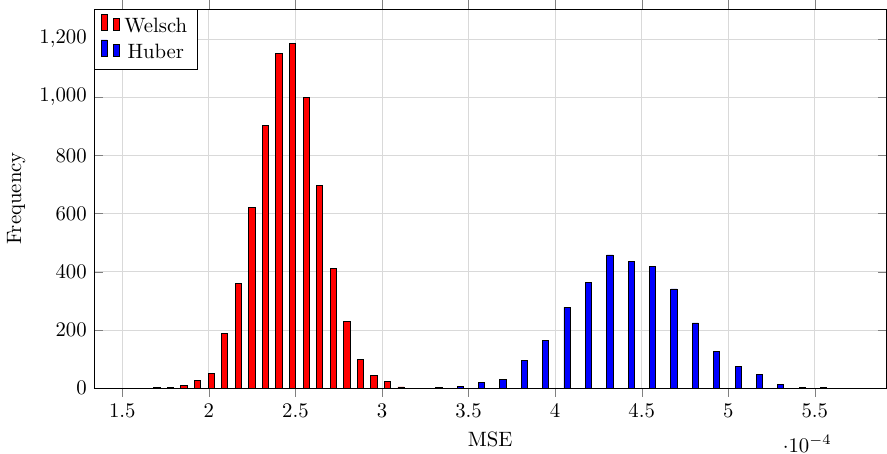}
    \caption{\small Distribution of the MSE of Huber and Welsch estimators under corruption.}
    \label{error}
\end{figure}

In the following experiment, illustrated in Figure \ref{error}, we compare the distribution of the Mean Squared Error (MSE) produced by Welsch and Huber estimators. Specifically, we apply Algorithm \ref{alpha_div_algo} to estimate $\beta^*$ over 10,000 experiments, each using a distinct dataset $\left((X_1, Y_1), \ldots, (X_n, Y_n)\right)$, injecting $10\%$ of outliers adevrsially, generated according to model \eqref{model}. 
In all these experiments, we use the L-BFGS algorithm to solve the optimization problems. We observe that Welsch estimator highly reduces the estimation bias present in the Huber-based estimator, thereby improving the overall estimator performance. 
Additionally, despite our approach consisting of a two-step procedure, the average number of iterations required for convergence is comparable between the two estimators. Finally, To further evaluate robustness of the Weslch estimator, we repeat the experiment with an increased proportion of adversarially injected outliers (20\%). As shown in Figure \ref{fig::error_smooth}, the Welsch estimator continues to exhibit a slight advantage over other redescending M-estimators in terms of error reduction.

\begin{figure}[!t]
    \centering
\includegraphics[width=10cm, height = 6cm]{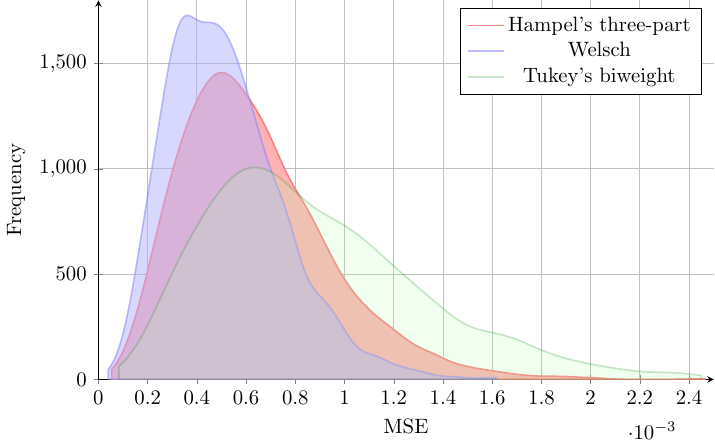}
    \caption{\small Distribution of the MSE of redescending estimators  under heavy corruption.}
    \label{fig::error_smooth}
\end{figure}  

\subsection{Real-world data}
To evaluate the performance of our algorithm on real-world data, we conducted experiments using the Housing dataset \citep{de2011ames}. This dataset contains approximately 79 features describing residential properties and is widely used for predicting house sale prices in Ames, Iowa, USA. As noted in \cite{de2011ames}, five of the 2,930 observations are identified as outliers: three correspond to partial sales that do not accurately reflect market values, while the remaining two represent exceptionally large properties with relatively reasonable prices.

The second real-world dataset used in our evaluation is the Abalone dataset \citep{abalone_1}, which consists of 4,177 samples. It includes eight input variables such as sex, length, diameter, height, total weight, and shell weight, along with a target variable representing the number of rings. The Abalone dataset has been extensively utilized in regression tasks \citep{yang2011feature,gu2014incremental,zou2013generalization} as well as in various machine learning applications \citep{zhou2023deep,demirkaya2024optimal,zhu2023classification}. 

For all studied estimators, hyperparameters were selected in a data-driven manner using cross-validation. To enhance robustness against outliers in the validation set, we used the median rather than the mean in the cross-validation process. Notably, the selected $\gamma$ for Huber regression, obtained through this approach, closely aligns with the rule-of-thumb estimate described in \cite{zhou2024enveloped}. The results presented in Figure \ref{fig::real_world_datasets} are consistent with those obtained in the simulated experiments and discussed in the related work section \ref{alpha_div_section}, hence supporting our theoretical findings. In particular, the Welsch estimator reduces estimation bias and decreases the variance of the residuals compared to state-of-the-art robust estimators. Furthermore, it offers a modest but consistent improvement over other redescenders, as illustrated in Figure~\ref{fig::real_world_datasets_smooth}.

    \begin{figure}[!t]
    \centering
    \begin{subfigure}[b]{0.48\textwidth}
        \centering
       \scalebox{0.7}{%
    \begin{tikzpicture}
\begin{axis}[
    title={},
    xlabel={$y - X^T \hat{\beta}$},
    ylabel={Density},
    enlargelimits=0.15,
    legend style={at={(0.05,0.95)},anchor=north west},
    legend cell align={left},
    grid=both,
    width=12cm,
    height=8cm,
    xmin=-4.5 1e5,
    xmax=2.5 1e5,
    ymax=2 1e-6,
    axis x line=bottom, 
    axis y line=left, 
    legend image post style={rotate=-30} 
]

\addplot[red!50, thick, smooth, fill=red!30, opacity=0.5] table [
    x={Bin Center}, 
    y={Density}, 
    col sep=comma
] {csv_files/density_Huber_armes_housing.csv} \closedcycle;
\addlegendentry{Huber}

\addplot[blue!50, thick, smooth, fill=blue!30, opacity=0.5] table [
    x={Bin Center}, 
    y={Density}, 
    col sep=comma
] {csv_files/density_Alpha-div_armes_housing.csv} \closedcycle;
\addlegendentry{Welsch}

\addplot[green!50!black, thick, smooth, fill=green!30, opacity=0.5] table [
    x={Bin Center}, 
    y={Density}, 
    col sep=comma
] {csv_files/density_QR_armes_housing.csv} \closedcycle;
\addlegendentry{Least absolute deviations}

\draw [red, thick, dashed] (axis cs:0,0) -- (axis cs:0,\pgfkeysvalueof{/pgfplots/ymax});

\end{axis}
\end{tikzpicture}}
    \caption{\small Ames Housing dataset.}
    \label{fig::armes_housing_density}
    \end{subfigure}
    \begin{subfigure}[b]{0.48\textwidth}
     \centering
    \scalebox{0.7}{%
    \begin{tikzpicture}
\begin{axis}[
    title={},
    xlabel={$y - X^T \hat{\beta}$},
    ylabel={Density},
    enlargelimits=0.15,
    grid=both,
    width=12cm,
    height=8cm,
    xmax=5,
    xmin=-15,
    xmax=15,
    ymax=0.3,
    axis x line=bottom, 
    axis y line=left, 
]

\addplot[red!50, thick, smooth, fill=red!30, opacity=2] table [
    x={Bin Center}, 
    y={Density}, 
    col sep=comma
] {csv_files/density_Huber_abalone.csv} \closedcycle;

\addplot[blue!50, thick, smooth, fill=blue!30, opacity=0.5] table [
    x={x}, 
    y={density}, 
    col sep=comma
] {csv_files/alpha_div_abylone_KDE.csv} \closedcycle;

\addplot[green!50!black, thick,smooth, fill=green!30,smooth, opacity=0.2] table [
    x={Bin Center}, 
    y={Density}, 
    col sep=comma
] {csv_files/density_QR_abalone.csv} \closedcycle;

\draw [red, thick, dashed] (axis cs:0,0) -- (axis cs:0,\pgfkeysvalueof{/pgfplots/ymax});

\end{axis}
\end{tikzpicture}}
        \caption{Abalone dataset.}
        \label{fig::Abalone_residuals_density}
    \end{subfigure}
    \caption{\small Distribution of the residuals of the Welsch estimator compared to non-redescenders.} 
    \label{fig::real_world_datasets}
\end{figure}

\begin{figure}[H]
    \centering
   \begin{subfigure}[b]{0.48\textwidth}
        \centering
       \scalebox{0.7}{%
    \begin{tikzpicture}
\begin{axis}[
    title={},
    xlabel={$y - X^T \hat{\beta}$},
    ylabel={Density},
    enlargelimits=0.15,
    legend style={at={(0.0,0.95)},anchor=north west},
    legend cell align={left},
    grid=both,
    width=12cm,
    height=8cm,
    xmin=-1.9 1e5,
    xmax=1.9 1e5,
    ymax=2 1e-6,
    axis x line=bottom, 
    axis y line=left, 
    legend image post style={rotate=-30} 
]

\addplot[red!50, thick, smooth, fill=red!30, opacity=0.5] table [
    x={x}, 
    y={density}, 
    col sep=comma
] {csv_files/armes_housing_hampel.csv} \closedcycle;
\addlegendentry{Hampel’s three-part}

\addplot[blue!50, thick, smooth, fill=blue!30, opacity=0.5] table [
    x={x}, 
    y={density}, 
    col sep=comma
] {csv_files/armes_housing_welsch.csv} \closedcycle;
\addlegendentry{Welsch}

\addplot[green!50!black, thick, smooth, fill=green!30, opacity=0.5] table [
    x={x}, 
    y={density}, 
    col sep=comma
] {csv_files/armes_housing_tukey.csv} \closedcycle;
\addlegendentry{Tukey’s biweight}

\draw [red, thick, dashed] (axis cs:0,0) -- (axis cs:0,\pgfkeysvalueof{/pgfplots/ymax});

\end{axis}
\end{tikzpicture}}
    \caption{\small Ames Housing dataset.}
    \label{fig::armes_housing_density_smooth}
    \end{subfigure}
    \begin{subfigure}[b]{0.48\textwidth}
        \centering
    \scalebox{0.7}{%
    \begin{tikzpicture}
\begin{axis}[
    title={},
    xlabel={$y - X^T \hat{\beta}$},
    ylabel={Density},
    enlargelimits=0.15,
    grid=both,
    width=12cm,
    height=8cm,
    xmax=5,
    xmin=-15,
    xmax=15,
    ymax=0.3,
    axis x line=bottom, 
    axis y line=left, 
]

\addplot[red!50, thick, smooth, fill=red!30, opacity=2] table [
    x={x}, 
    y={density}, 
    col sep=comma
] {csv_files/Hampel_abylone_KDE.csv} \closedcycle;

\addplot[blue!50, thick, smooth, fill=blue!30, opacity=0.5] table [
    x={x}, 
    y={density}, 
    col sep=comma
] {csv_files/alpha_div_abylone_KDE.csv} \closedcycle;

\addplot[green!50!black, thick,smooth, fill=green!30,smooth, opacity=0.2] table [
    x={x}, 
    y={density}, 
    col sep=comma
] {csv_files/tukey_abylone_KDE.csv} \closedcycle;

\draw [red, thick, dashed] (axis cs:0,0) -- (axis cs:0,\pgfkeysvalueof{/pgfplots/ymax});

\end{axis}
\end{tikzpicture}}
        \caption{Abalone dataset.}
        \label{fig::Abalone_residuals_density_smooth}
    \end{subfigure}
    \caption{\small Distribution of the residuals of the Welsch estimator compared to other redescenders.}
    \label{fig::real_world_datasets_smooth}
\end{figure}

\bibliographystyle{chicago}
\bibliography{references.bib}

\begin{thebibliography}{}

\bibitem[\protect\citeauthoryear{Ali and Silvey}{Ali and Silvey}{1966}]{ali1966general}
Ali, S.~M. and S.~D. Silvey (1966).
\newblock A general class of coefficients of divergence of one distribution from another.
\newblock {\em Journal of the Royal Statistical Society: Series B (Methodological)\/}~{\em 28\/}(1), 131--142.

\bibitem[\protect\citeauthoryear{Byrd, Lu, Nocedal, and Zhu}{Byrd et~al.}{1995}]{Limited_memory_B}
Byrd, R.~H., P.~Lu, J.~Nocedal, and C.~Zhu (1995).
\newblock A limited memory algorithm for bound constrained optimization.
\newblock {\em SIAM Journal on Scientific Computing\/}~{\em 16\/}(5), 1190--1208.

\bibitem[\protect\citeauthoryear{Chernoff}{Chernoff}{1952}]{chernoff1952measure}
Chernoff, H. (1952).
\newblock A measure of asymptotic efficiency for tests of a hypothesis based on the sum of observations.
\newblock {\em The Annals of Mathematical Statistics\/}, 493--507.

\bibitem[\protect\citeauthoryear{Csisz{\'a}r}{Csisz{\'a}r}{1967}]{csiszar1967information}
Csisz{\'a}r, I. (1967).
\newblock On information-type measure of difference of probability distributions and indirect observations.
\newblock {\em Studia Sci. Math. Hungar.\/}~{\em 2}, 299--318.

\bibitem[\protect\citeauthoryear{Dalalyan and Thompson}{Dalalyan and Thompson}{2019}]{Outlier-robust_D}
Dalalyan, A. and P.~Thompson (2019).
\newblock Outlier-robust estimation of a sparse linear model using $\ell_1$-penalized huber's $m$-estimator.
\newblock {\em Advances in neural information processing systems\/}~{\em 32}.

\bibitem[\protect\citeauthoryear{De~Cock}{De~Cock}{2011}]{de2011ames}
De~Cock, D. (2011).
\newblock Ames, iowa: Alternative to the boston housing data as an end of semester regression project.
\newblock {\em Journal of Statistics Education\/}~{\em 19\/}(3).

\bibitem[\protect\citeauthoryear{Hadi and Simonoff}{Hadi and Simonoff}{1993}]{hadi1993procedures}
Hadi, A.~S. and J.~S. Simonoff (1993).
\newblock Procedures for the identification of multiple outliers in linear models.
\newblock {\em Journal of the American statistical association\/}~{\em 88\/}(424), 1264--1272.

\bibitem[\protect\citeauthoryear{Hellwig}{Hellwig}{2014}]{hellwig2014linear}
Hellwig, Z. (2014).
\newblock {\em Linear Regression and its application to economics}.
\newblock Elsevier.

\bibitem[\protect\citeauthoryear{Hernandez-Lobato, Li, Rowland, Bui, Hern{\'a}ndez-Lobato, and Turner}{Hernandez-Lobato et~al.}{2016}]{hernandez2016black}
Hernandez-Lobato, J., Y.~Li, M.~Rowland, T.~Bui, D.~Hern{\'a}ndez-Lobato, and R.~Turner (2016).
\newblock Black-box alpha divergence minimization.
\newblock In {\em International conference on machine learning}, pp.\  1511--1520. PMLR.

\bibitem[\protect\citeauthoryear{Huber}{Huber}{1992}]{huber1992robust}
Huber, P.~J. (1992).
\newblock Robust estimation of a location parameter.
\newblock In {\em Breakthroughs in statistics: Methodology and distribution}, pp.\  492--518. Springer.

\bibitem[\protect\citeauthoryear{Huber and Ronchetti}{Huber and Ronchetti}{2011}]{huber2011robust}
Huber, P.~J. and E.~M. Ronchetti (2011).
\newblock {\em Robust statistics}.
\newblock John Wiley \& Sons.

\bibitem[\protect\citeauthoryear{Iqbal and Seghouane}{Iqbal and Seghouane}{2019}]{iqbal2019alpha}
Iqbal, A. and A.-K. Seghouane (2019).
\newblock An $\alpha$-divergence-based approach for robust dictionary learning.
\newblock {\em IEEE Transactions on Image Processing\/}~{\em 28\/}(11), 5729--5739.

\bibitem[\protect\citeauthoryear{Kullback and Leibler}{Kullback and Leibler}{1951}]{kullback1951information}
Kullback, S. and R.~A. Leibler (1951).
\newblock On information and sufficiency.
\newblock {\em The annals of mathematical statistics\/}~{\em 22\/}(1), 79--86.

\bibitem[\protect\citeauthoryear{Li and Gal}{Li and Gal}{2017}]{li2017dropout}
Li, Y. and Y.~Gal (2017).
\newblock Dropout inference in bayesian neural networks with alpha-divergences.
\newblock In {\em International conference on machine learning}, pp.\  2052--2061. PMLR.

\bibitem[\protect\citeauthoryear{Minsker}{Minsker}{2019}]{Distributed_S}
Minsker, S. (2019).
\newblock {Distributed statistical estimation and rates of convergence in normal approximation}.
\newblock {\em Electronic Journal of Statistics\/}~{\em 13\/}(2), 5213 -- 5252.

\bibitem[\protect\citeauthoryear{Minsker, Ndaoud, and Wang}{Minsker et~al.}{2024}]{minsker2024robust}
Minsker, S., M.~Ndaoud, and L.~Wang (2024).
\newblock Robust and tuning-free sparse linear regression via square-root slope.
\newblock {\em SIAM Journal on Mathematics of Data Science\/}~{\em 6\/}(2), 428--453.

\bibitem[\protect\citeauthoryear{Pensia, Jog, and Loh}{Pensia et~al.}{2024}]{pensia2021robust}
Pensia, A., V.~Jog, and P.-L. Loh (2024).
\newblock Robust regression with covariate filtering: Heavy tails and adversarial contamination.
\newblock {\em Journal of the American Statistical Association\/}~(just-accepted), 1--40.

\bibitem[\protect\citeauthoryear{R{\'e}nyi}{R{\'e}nyi}{1961}]{renyi1961measures}
R{\'e}nyi, A. (1961).
\newblock On measures of entropy and information.
\newblock In {\em Proceedings of the fourth Berkeley symposium on mathematical statistics and probability, volume 1: contributions to the theory of statistics}, Volume~4, pp.\  547--562. University of California Press.

\bibitem[\protect\citeauthoryear{Rousseeuw and Leroy}{Rousseeuw and Leroy}{1987}]{Robust_R}
Rousseeuw, P. and A.~Leroy (1987, 01).
\newblock Robust regression \& outlier detection, john wiley \& sons.
\newblock {\em Journal of Educational Statistics\/}~{\em 13}, 358--364.

\bibitem[\protect\citeauthoryear{Rousseeuw, Hampel, Ronchetti, and Stahel}{Rousseeuw et~al.}{1986}]{rousseeuw1986robust}
Rousseeuw, P.~J., F.~R. Hampel, E.~M. Ronchetti, and W.~A. Stahel (1986).
\newblock Robust statistics: the approach based on influence functions.

\bibitem[\protect\citeauthoryear{Sasai and Fujisawa}{Sasai and Fujisawa}{2020}]{sasai2020robust}
Sasai, T. and H.~Fujisawa (2020).
\newblock Robust estimation with lasso when outputs are adversarially contaminated.
\newblock {\em arXiv preprint arXiv:2004.05990\/}.

\bibitem[\protect\citeauthoryear{Tong and Guo}{Tong and Guo}{2022}]{tong2022meta}
Tong, G. and G.~Guo (2022).
\newblock Meta-analysis in sociological research: Power and heterogeneity.
\newblock {\em Sociological Methods \& Research\/}~{\em 51\/}(2), 566--604.

\bibitem[\protect\citeauthoryear{Vershynin}{Vershynin}{2018}]{Vershynin_2018}
Vershynin, R. (2018).
\newblock {\em High-Dimensional Probability: An Introduction with Applications in Data Science}.
\newblock Cambridge Series in Statistical and Probabilistic Mathematics. Cambridge University Press.

\bibitem[\protect\citeauthoryear{Weisberg and Cook}{Weisberg and Cook}{1982}]{weisberg1982residuals}
Weisberg, S. and R.~Cook (1982).
\newblock {\em Residuals and Influence in Regression}.
\newblock Chapman \& Hall.

\bibitem[\protect\citeauthoryear{Zhu, Byrd, Lu, and Nocedal}{Zhu et~al.}{1997}]{BFGS-B_Z}
Zhu, C., R.~H. Byrd, P.~Lu, and J.~Nocedal (1997).
\newblock Algorithm 778: L-bfgs-b: Fortran subroutines for large-scale bound-constrained optimization.
\newblock {\em ACM Trans. Math. Softw.\/}~{\em 23}, 550–560.

\end{thebibliography}

\newpage

\appendix 
\pagenumbering{arabic}
\renewcommand{\theequation}{\thesection.\arabic{equation}}
\setcounter{equation}{0}

\begin{center}
\Large  Supplementary Material for ``Robust Regression under Adversarial Contamination: Theory and Algorithms for the Welsch Estimator'' 
\end{center}

\medskip
\medskip
\medskip

\section{Technical results}
In this section, we state the concentration inequalities and technical Lemmas that will be used in the proofs that follow. 

\begin{theorem} 
[\cite{Vershynin_2018}]\label{valeur_propre_1}
    Let \( X \in \mathbf{R}^{n \times p} \) be a random matrix whose rows  \( (X_i)_{i \leq n} \) are independent \( K_i \)-sub-Gaussian random variables in \( \mathbf{R}^p \). There exists a sufficiently large constant \( C_K > 0 \), depending only on the parameter \( K = \max_i K_i \), such that for all \( t \geq 1 \), with probability at least \( 1 - e^{-t} \), the following inequality holds:
\[
\left\| \frac{1}{n} XX^T - \frac{1}{n}\mathbf{E}(XX^T) \right\|_{op} \leq \frac{C_K}{n} \left\| \mathbf{E}(XX^T) \right\|_{op} 
\left( \sqrt{\frac{p}{n}} + \sqrt{\frac{t}{n}}+ \frac{p}{n} + \frac{t}{n} \right).
\]
This inequality is sharp when $X$ is isotropic, i.e when $\mathbf{E}[X_iX_i^T]= \identite_p$, for all $0 \leq i \leq n$.
\end{theorem}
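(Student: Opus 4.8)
The plan is to follow the classical route for covariance estimation of sub-Gaussian random vectors: reduce to the isotropic case, discretize the operator norm by an $\varepsilon$-net on the sphere, control each one-dimensional marginal by a Bernstein-type bound, and finish with a union bound. Since the statement is quoted from \cite{Vershynin_2018}, the argument below is essentially a reproduction of the proof given there (and throughout I read $\tfrac1n XX^T$ as $\tfrac1n\sum_{i=1}^n X_iX_i^T$, consistent with Assumption \ref{assmp2}).

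First I would reduce to the isotropic case. Set $\Sigma := \mathbf{E}(X_1X_1^T)$ and work on the range of $\Sigma$, so that $\Sigma$ may be assumed invertible; replacing $X_i$ by $Z_i := \Sigma^{-1/2}X_i$ only inflates the sub-Gaussian constant by an absolute factor, makes the $Z_i$ isotropic, and factors out the prefactor $\|\mathbf{E}(XX^T)\|_{op}$, so that it suffices to bound $\|A\|_{op}$ with $A := \tfrac1n\sum_{i=1}^n Z_iZ_i^T - \identite_p$. Next I would invoke the standard net estimate: for a $\tfrac14$-net $\mathcal N$ of the unit sphere $S^{p-1}$ with $|\mathcal N|\le 9^p$ one has $\|A\|_{op}\le 2\max_{x\in\mathcal N}|\langle Ax,x\rangle|$, and $\langle Ax,x\rangle = \tfrac1n\sum_{i=1}^n\big(\langle Z_i,x\rangle^2-1\big)$, which reduces the problem to a union bound over $\mathcal N$ of a scalar concentration statement for each fixed direction $x$.

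For a fixed $x\in S^{p-1}$, the variables $\langle Z_i,x\rangle$ are independent, centered, $O(K)$-sub-Gaussian with unit second moment, hence $\langle Z_i,x\rangle^2-1$ are independent, centered, and sub-exponential with $\psi_1$-norm $O(K^2)$ (the square of a sub-Gaussian variable is sub-exponential). Bernstein's inequality for sums of independent sub-exponential variables then gives, for all $u\ge 0$,
\[
\mathbf{P}\!\left(\Big|\tfrac1n\sum_{i=1}^n(\langle Z_i,x\rangle^2-1)\Big|\ge u\right)\le 2\exp\!\big(-c\,n\min(u^2/K^4,\,u/K^2)\big).
\]
Taking a union bound over $\mathcal N$ costs a factor $9^p=e^{p\log 9}$, so choosing $u \asymp C_K\big(\sqrt{(p+t)/n}+(p+t)/n\big)$ makes the exponent dominate $p\log 9 + t$; undoing the net inequality and the isotropic reduction yields the claimed bound with the $\|\mathbf{E}(XX^T)\|_{op}$ prefactor, after splitting $\sqrt{(p+t)/n}\le\sqrt{p/n}+\sqrt{t/n}$ and similarly for the linear term. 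Sharpness in the isotropic case is the matching lower bound, obtained by exhibiting a near-extremal direction (an anti-concentration/second-moment argument on $\langle Ax,x\rangle$).

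The main obstacle — and essentially the only place care is needed — is the sub-exponential input to the Bernstein step: one must check that $\|\langle Z_i,x\rangle^2-1\|_{\psi_1}\lesssim \|\langle Z_i,x\rangle\|_{\psi_2}^2 \lesssim K^2$ uniformly over $x\in S^{p-1}$ and $i\le n$, and correctly propagate the two tail regimes (sub-Gaussian for small $u$, exponential for large $u$) so that both the square-root and the linear-in-$(p,t)$ terms appear in the final bound. Everything else is bookkeeping of absolute constants and the routine cardinality bound for nets.
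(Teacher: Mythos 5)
The paper does not prove this statement at all: it is imported verbatim as a known technical tool, with the proof deferred to the cited reference \cite{Vershynin_2018}. Your sketch correctly reproduces the standard argument from that source (reduction to the isotropic case, a $\tfrac14$-net on the sphere with cardinality $9^p$, Bernstein's inequality for the sub-exponential variables $\langle Z_i,x\rangle^2-1$, and a union bound), including the correct reading of $\tfrac1n XX^T$ as the $p\times p$ sample second-moment matrix, so there is nothing in the paper to compare it against and no gap to report.
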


The following Lemma is easily derived from the Theorem stated above. 

\begin{lemma} 
[\cite{Vershynin_2018}]\label{valeurs_singulière}
    Let \( X \in \mathbf{R}^{n \times p} \) be a random matrix whose rows  \( (X_i)_{i \leq n} \) are independent \( K_i \)-sub-Gaussian random variables in \( \mathbf{R}^p \). Let \( s_1(X), \dots, s_n(X) \) be the singular values of $ X $ such that \( s_1(X) \geq \dots \geq s_n(X) \). Thus, for all \( \delta > 0 \), there exists a sufficiently large constant \( C_K > 0 \) and a sufficiently small constant $c_K$, depending only on the parameter \( K = \max_i K_i \), such that with probability at least  \( 1 - \delta \), the following inequality holds:
\[
\sqrt{n} - C_K \sqrt{p} - \sqrt{\frac{\log(2/\delta)}{c_K}} \leq s_n(X) \leq s_1(X) \leq \sqrt{n} + C_K \sqrt{p} + \sqrt{\frac{\log(2/\delta)}{c_K}}.
\]
\end{lemma}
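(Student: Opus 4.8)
The plan is to obtain the lemma as a corollary of Theorem~\ref{valeur_propre_1}, passing from operator-norm concentration of the empirical second-moment matrix to a two-sided control of the singular values of $X$ via a square-root argument. Write $\widehat{\Sigma} := \frac{1}{n} X X^{T} = \frac{1}{n}\sum_{i=1}^{n} X_i X_i^{T}$ in the notation of Theorem~\ref{valeur_propre_1}; this is a $p\times p$ matrix whose eigenvalues are exactly $s_i(X)^2/n$, and under the isotropy normalization $\frac1n\mathbf{E}(XX^{T}) = \identite_p$ one has
\[
\big\| \widehat{\Sigma} - \identite_p \big\|_{op} = \max_{i} \Big| \tfrac{s_i(X)^2}{n} - 1 \Big| .
\]
So it suffices to bound this quantity with probability $1-\delta$ and then take square roots term by term.

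First, I would apply Theorem~\ref{valeur_propre_1} with $t := \log(2/\delta)$, which yields, on an event of probability at least $1 - e^{-t} = 1 - \delta/2 \ge 1 - \delta$,
\[
\big\| \widehat{\Sigma} - \identite_p \big\|_{op} \;\le\; C_K\Big( \sqrt{\tfrac{p}{n}} + \sqrt{\tfrac{t}{n}} + \tfrac{p}{n} + \tfrac{t}{n} \Big).
\]
The one step requiring a little care is to rewrite this four-term bound in the form $\max(\eta, \eta^{2})$ with $\eta := C_K'\big( \sqrt{p/n} + \sqrt{t/n}\big)$ for a suitably enlarged constant $C_K'$: indeed $\sqrt{p/n}+\sqrt{t/n}\le \eta$ and $p/n+t/n \le \big(\sqrt{p/n}+\sqrt{t/n}\big)^{2}\le \eta^{2}$, so the right-hand side is at most $2\max(\eta,\eta^{2})$, which after absorbing the factor $2$ into $C_K'$ we may take to be $\le \max(\eta,\eta^2)$. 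Hence, with probability at least $1-\delta$, $\max_i\big| s_i(X)^2/n - 1 \big| \le \max(\eta,\eta^2)$.

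Next, I would use the elementary implication: for any $z \ge 0$, if $|z - 1| \le \max(\eta, \eta^{2})$ then $|\sqrt{z} - 1| \le \eta$ (proved by treating the cases $z \ge 1$ and $z < 1$ separately, squaring the target inequality; when $\eta \ge 1$ the bound is immediate in the case $z<1$ since $\sqrt z \ge 0$). Applying this with $z = s_i(X)^2/n$ gives $\big| s_i(X)/\sqrt n - 1 \big| \le \eta$ for every $i$, equivalently $\sqrt n(1 - \eta) \le s_i(X) \le \sqrt n(1 + \eta)$. Expanding, $\sqrt n - \sqrt n\,\eta \le s_n(X) \le s_1(X) \le \sqrt n + \sqrt n\,\eta$, and $\sqrt n\,\eta = C_K'\big(\sqrt p + \sqrt{\log(2/\delta)}\big)$. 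Choosing $c_K := 1/(C_K')^{2}$, so that $C_K'\sqrt{\log(2/\delta)} = \sqrt{\log(2/\delta)/c_K}$, and renaming $C_K := C_K'$, yields exactly the claimed two-sided inequality.

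There is no genuine obstacle here: the result is a corollary of Theorem~\ref{valeur_propre_1}. The only point demanding attention is the conversion from the quadratic-form (covariance) deviation to the singular-value deviation, since a bound on $|z-1|$ alone does not control $|\sqrt z - 1|$ unless one tracks whether the deviation is of order $\eta$ or of order $\eta^{2}$ -- this is precisely the purpose of the $\max(\eta,\eta^{2})$ bookkeeping above. Everything else -- choosing $t = \log(2/\delta)$ to turn the $e^{-t}$ confidence level into $\delta$, and absorbing lower-order terms and numerical constants into $C_K$ and $c_K$ -- is routine.
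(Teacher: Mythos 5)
Your derivation is correct and follows exactly the route the paper indicates (the paper gives no explicit proof, stating only that the lemma ``is easily derived'' from Theorem~\ref{valeur_propre_1}): you pass from the operator-norm deviation of $\frac1n\sum_i X_iX_i^T$ to the singular values via the standard $\max(\eta,\eta^2)$ square-root lemma, with $t=\log(2/\delta)$. The bookkeeping (absorbing the factor $2$ and the lower-order terms into $C_K'$, and setting $c_K=1/(C_K')^2$) is handled properly, so nothing is missing.
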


\begin{theorem} 
[\cite{Vershynin_2018}]\label{norme}
Let $X$ be an isotropic $K$-sub-Gaussian random vector, then :
\[
\mathbf{E}(\|X\|) \leq 4\sqrt{Kp}.
\]
Moreover, with probability at least \( 1 - \delta \) for any  \( \delta \in (0, 1) \) :
\[
\|X\|\leq 4\sqrt{K p} + 2\sqrt{K\log\left(\frac{1}{\delta}\right)}.
\]
\end{theorem}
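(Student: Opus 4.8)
The plan is to reduce the norm of $X$ to a supremum of scalar sub-Gaussian random variables over the unit sphere, discretize that sphere with a finite net, and then combine a union bound with the sub-Gaussian tail of each linear functional; the expectation bound then follows either by integrating the resulting tail or, more directly, from isotropy.

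First I would write $\|X\| = \sup_{u \in S^{p-1}} \langle X, u \rangle$ and fix a $\tfrac{1}{2}$-net $\mathcal{N}$ of the unit sphere $S^{p-1} \subset \mathbf{R}^p$; the standard volumetric covering bound gives $|\mathcal{N}| \leq 5^p$. A routine approximation argument (bounding the gap between $\sup$ over the sphere and $\max$ over the net by $\tfrac{1}{2}\|X\|$) then yields $\|X\| \leq 2 \max_{u \in \mathcal{N}} \langle X, u \rangle$, so it suffices to control the maximum of finitely many linear functionals of $X$.

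Next, for each fixed unit vector $u$, the $K$-sub-Gaussian assumption gives $\mathbf{E}\exp(s\langle X, u\rangle) = \mathbf{E}\exp(\langle su, X\rangle) \leq \exp(Ks^2/2)$, so $\langle X, u\rangle$ is a scalar sub-Gaussian variable with proxy variance $K$, and the Chernoff bound produces $\mathbf{P}(\langle X, u\rangle \geq t) \leq \exp(-t^2/(2K))$. A union bound over $\mathcal{N}$ combined with the reduction above then gives $\mathbf{P}(\|X\| \geq t) \leq 5^p \exp(-t^2/(8K))$. Setting the right-hand side equal to $\delta$ and solving for $t$ separates the dimensional contribution from the confidence contribution, producing a bound of the form $\|X\| \leq c_1\sqrt{Kp} + c_2\sqrt{K\log(1/\delta)}$, which matches the stated inequality once the absolute constants are tracked.

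Finally, for the expectation bound I would either integrate the tail via $\mathbf{E}\|X\| = \int_0^\infty \mathbf{P}(\|X\| \geq t)\,dt$, or, more cheaply, exploit isotropy: since $\mathbf{E}(X X^\top) = \identite_p$ we have $\mathbf{E}\|X\|^2 = \mathrm{tr}(\identite_p) = p$, whence Jensen's inequality gives $\mathbf{E}\|X\| \leq \sqrt{p} \leq 4\sqrt{Kp}$. The only delicate point is matching the precise numerical constants $4$ and $2$, which is pure bookkeeping in the net cardinality and the Chernoff optimization rather than a conceptual obstacle; since the statement is quoted from \cite{Vershynin_2018}, the essential content is the structural form of the bound, namely the parametric scale $\sqrt{Kp}$ together with a sub-Gaussian confidence term $\sqrt{K\log(1/\delta)}$.
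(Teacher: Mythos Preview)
The paper does not supply its own proof of this statement; it is simply quoted as a technical lemma from \cite{Vershynin_2018}. Your argument is the standard $\varepsilon$-net plus union-bound route that Vershynin himself uses for such norm bounds, and your shortcut for the expectation via isotropy and Jensen ($\mathbf{E}\|X\|^2 = p$, hence $\mathbf{E}\|X\| \leq \sqrt{p} \leq 4\sqrt{Kp}$ since $K\geq 1$ for isotropic vectors) is correct and in fact sharper than what is stated. The only loose end, which you already flag, is that the net argument as written delivers $\sqrt{8}\sqrt{K\log(1/\delta)}\approx 2.83\sqrt{K\log(1/\delta)}$ rather than $2\sqrt{K\log(1/\delta)}$; this is immaterial here because the paper only ever uses the result up to absolute constants.
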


\begin{lemma} [\cite{minsker2024robust}]
    \label{control_noise} 
    Let us define the notation \(\xi_{(i)}\) as the \(i\)-th largest component, in absolute value, of the vector \(\xi\). If the noise vector \(\xi\) satisfies Assumption \ref{assmp3} and \(o \leq n/1000\), then we have that

\[
\mathbf{P} \left(  \sum_{i=o}^{n} |\xi|_{(i)}^2 \leq 2n \right) \geq 1 - 2e^{-co},
\]
for an absolute constant \(c > 0\).
\end{lemma}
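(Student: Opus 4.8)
The plan is to split the control of the truncated energy $\sum_{i=o}^n |\xi|_{(i)}^2$ into a \emph{counting} estimate, bounding how many coordinates are ``large'', and a \emph{bulk} estimate, bounding the truncated energy of the remaining ones. First I would fix a truncation level $T := \gamma\, n/o$, where $\gamma$ is a constant to be pinned down at the very end, depending only on the universal bounds $a$ and $\ell$ of Assumption~\ref{assmp3} (hence absolute), and set $N_T := \#\{\,i\le n:\ \xi_i^2>T\,\}$. The key deterministic observation is that on the event $\{N_T<o\}$ the $o$-th largest squared coordinate satisfies $|\xi|_{(o)}^2\le T$ --- strictly fewer than $o$ coordinates lie above $T$ --- so $|\xi|_{(i)}^2\le T$ for every $i\ge o$, and therefore
\[
\sum_{i=o}^n|\xi|_{(i)}^2\ \le\ \sum_{i=1}^n\xi_i^2\,\mathds{1}\!\bigl(\xi_i^2\le T\bigr).
\]
It then suffices to prove the two probabilistic facts: (i) $\mathbf{P}(N_T\ge o)\le e^{-c_1 o}$ and (ii) $\sum_{i=1}^n\xi_i^2\,\mathds{1}(\xi_i^2\le T)\le 2n$ with probability at least $1-e^{-c_2 o}$; a union bound finishes the proof with $c=\min(c_1,c_2)$.

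For (i), the indicators $\mathds{1}(\xi_i^2>T)$ are i.i.d.\ Bernoulli$(p_T)$ with $p_T=\mathbf{P}(\xi^2>T)$. When $\ell<\infty$, Markov's inequality on $|\xi|^\ell$ gives $p_T\le a^\ell T^{-\ell/2}=a^\ell\gamma^{-\ell/2}(o/n)^{\ell/2}$, so $n p_T\le a^\ell\gamma^{-\ell/2}\,o\,(o/n)^{\ell/2-1}\le a^\ell\gamma^{-\ell/2}\,o$ using $\ell\ge 2$ and $o\le n$; when $\ell=\infty$ one uses instead the sub-Gaussian tail $\mathbf{P}(\xi^2>T)\le 2e^{-T/(2a^2)}$ together with $n/o\ge 1000$, which is even more favourable. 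Choosing $\gamma$ large enough (still a constant, since $a,\ell$ are universal) forces $n p_T\le o/(2e)$, and the Chernoff/union bound for a binomial tail, $\mathbf{P}(N_T\ge o)\le\binom{n}{o}p_T^{\,o}\le(e\,n p_T/o)^o\le 2^{-o}$, yields (i) with $c_1=\log 2$.

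For (ii), the variables $X_i:=\xi_i^2\,\mathds{1}(\xi_i^2\le T)$ are i.i.d., nonnegative, bounded by $T=\gamma n/o$, with $\mathbf{E} X_i\le\mathbf{E}\xi_i^2=1$ and $\mathrm{Var}(X_i)\le\mathbf{E}[\xi_i^4\,\mathds{1}(\xi_i^2\le T)]\le T\,\mathbf{E}\xi_i^2\le T$. Bernstein's inequality then gives, with probability at least $1-e^{-t}$,
\[
\sum_{i=1}^n X_i\ \le\ n+\sqrt{2t\,nT}+\tfrac{2}{3}\,tT\ =\ n+n\sqrt{2\gamma t/o}+\tfrac{2\gamma}{3}\,\tfrac{tn}{o}.
\]
Taking $t=c_2\,o$ with $c_2$ small enough (depending only on $\gamma$, hence absolute) makes the last two terms sum to at most $n$, so $\sum_i X_i\le 2n$, which is (ii). Combining with the deterministic inequality above and the bound from (i) gives the statement.

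The hard part --- and the place where the constants interlock --- is the tuning of $T$. It must be of order \emph{at least} $n/o$ so that the expected number of exceedances is $\ll o$ and the counting event (i) holds with probability $1-e^{-\Theta(o)}$; yet it must simultaneously be small enough that the heavy-tail term $tT\asymp\gamma n$ in Bernstein's bound can be absorbed into $n$ once the deviation parameter is taken as $t=\Theta(o)$ (forced by the target probability $1-e^{-\Theta(o)}$). These two demands are compatible precisely because the hypothesis $o\le n/1000$ supplies the necessary numerical slack, and $\gamma$, $c_1$, $c_2$ must be chosen jointly; a secondary technical point is handling the two regimes of Assumption~\ref{assmp3} ($\ell<\infty$ versus $\ell=\infty$) in the tail estimate for $p_T$, though the sub-Gaussian case is strictly easier.
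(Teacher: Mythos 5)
Your proof is correct. Note that the paper itself does not prove this lemma: it is imported verbatim from \cite{minsker2024robust}, so there is no in-paper argument to compare against. Your self-contained derivation --- truncate at $T\asymp n/o$, show via a binomial/Chernoff count that fewer than $o$ coordinates exceed $T$ (so the trimmed sum is dominated by the truncated full sum), then control the truncated sum by Bernstein with deviation parameter $t\asymp o$ --- is the standard route to this kind of trimmed-energy bound and all the steps check out: the deterministic reduction on $\{N_T<o\}$ is valid, the choice $\gamma\ge 2ea^2$ handles all $\ell\ge2$ uniformly (and the sub-Gaussian case a fortiori), and the Bernstein terms $\sqrt{2tnT}+\tfrac23 tT$ are indeed both $O(n)$ for $t=c_2o$, absorbable into $n$ for $c_2$ small. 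One small remark: the naive alternative of summing the envelope $n\mu_i^2\asymp (n/i)^{2/\ell}$ from Lemma~\ref{noise_mu} over $i\ge o$ would pick up a spurious $\log n$ at $\ell=2$, so the truncation-plus-Bernstein decomposition you chose is genuinely needed to get the clean $2n$ bound at the boundary case $\ell=2$.
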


\begin{lemma}[\cite{minsker2024robust}]
\label{noise_mu}
    For all \(1 \leq i \leq n\), we define \(\mu_i = \sqrt{\frac{C}{n}} \left(\frac{n}{i}\right)^{1/\ell}\) for \(\ell \geq 2\) and \(C \geq 80\). Then, for all  \(k \geq 1\), the following inequality holds :

\[
\mathbf{P}\left( \max_{i \geq k} \frac{|\xi|_{(i)}}{\sqrt{n} \mu_i} \geq \frac{1}{20} \right) \leq 2e^{-k}.
\]
\end{lemma}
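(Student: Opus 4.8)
The plan is to combine a union bound over the order statistics with a Markov-type moment estimate and a Chernoff bound for binomial upper tails. First I would note that, by the definition of $\mu_i$, one has $\sqrt{n}\,\mu_i = \sqrt{C}\,(n/i)^{1/\ell}$, so the event $\bigl\{\max_{i \geq k} |\xi|_{(i)}/(\sqrt{n}\mu_i) \geq 1/20\bigr\}$ coincides with $\bigcup_{i \geq k} A_i$, where $A_i := \bigl\{ |\xi|_{(i)} \geq t_i\bigr\}$ and $t_i := \tfrac{1}{20}\sqrt{C}\,(n/i)^{1/\ell}$. It then suffices to bound $\mathbf{P}(A_i)$ for each $i \geq k$ and sum.

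The next step is to translate $A_i$ into a statement about a sum of independent indicators: since $|\xi|_{(i)}$ is the $i$-th largest $|\xi_j|$, the event $|\xi|_{(i)} \geq t_i$ holds if and only if $N_i := \sum_{j=1}^n \mathds{1}\{|\xi_j| \geq t_i\}$ satisfies $N_i \geq i$. Markov's inequality applied to $|\xi_1|^\ell$, together with Assumption \ref{assmp3}, then gives $p_i := \mathbf{P}(|\xi_1| \geq t_i) \leq \mathbf{E}|\xi_1|^\ell / t_i^\ell \leq (20 a/\sqrt{C})^\ell \,(i/n)$, so that $\mathbf{E} N_i = n p_i \leq i\,(20 a/\sqrt{C})^\ell$. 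The point of the factor $(n/i)^{1/\ell}$ in $\mu_i$ is precisely this cancellation: it forces the bound on $\mathbf{E} N_i$ to be proportional to $i$ (and free of $n$), which is exactly what a Chernoff bound needs.

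Then, taking $C$ a sufficiently large absolute constant so that $(20 a/\sqrt{C})^\ell$ is at most a small constant uniformly in $\ell \geq 2$ (the standing hypotheses $C \geq 80$ and $a$ bounded are used here), the multiplicative Chernoff bound $\mathbf{P}(N_i \geq i) \leq (e\,\mathbf{E} N_i/i)^i$ yields $\mathbf{P}(A_i) = \mathbf{P}(N_i \geq i) \leq e^{-i}$. A union bound and a geometric series finish the argument: $\mathbf{P}\bigl(\bigcup_{i \geq k} A_i\bigr) \leq \sum_{i \geq k} e^{-i} \leq e^{-k}/(1-e^{-1}) \leq 2 e^{-k}$.

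I expect the only delicate point to be the constant bookkeeping in the Chernoff step: one has to verify that the per-index probability is genuinely at most $e^{-i}$ uniformly over all moment orders $\ell \geq 2$ (the extremal case being $\ell = 2$, once $20 a/\sqrt{C} < 1$), so that the geometric sum collapses cleanly to $2 e^{-k}$; if the lemma is also invoked for the sub-Gaussian case $\ell = \infty$, one simply replaces the Markov estimate by the sub-Gaussian tail bound of Assumption \ref{assmp3}, which only makes $p_i$ smaller. Everything else is routine.
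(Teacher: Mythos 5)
The paper does not actually prove this lemma: it is imported verbatim from \cite{minsker2024robust}, so there is no in-paper proof to compare against. Your route --- union bound over $i \ge k$, the exact equivalence $\{|\xi|_{(i)} \ge t_i\} = \{N_i \ge i\}$ with $N_i = \sum_{j}\mathds{1}\{|\xi_j|\ge t_i\}$, a Markov bound on $p_i = \mathbf{P}(|\xi_1| \ge t_i)$ via the $\ell$-th moment so that $n p_i \lesssim i$, and the binomial tail $\mathbf{P}(N_i \ge i) \le \binom{n}{i} p_i^{\,i} \le (e n p_i / i)^i$ followed by a geometric sum $\sum_{i \ge k} e^{-i} \le e^{-k}/(1-e^{-1}) \le 2e^{-k}$ --- is the standard argument for this kind of order-statistics bound and is essentially how the cited source proceeds. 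The structural steps are all correct, including your observation that the $(n/i)^{1/\ell}$ scaling exists precisely to make $\mathbf{E}N_i$ proportional to $i$ and free of $n$, and that the sub-Gaussian case $\ell=\infty$ only shrinks $p_i$.

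The point you flag as ``delicate'' is, however, a genuine numerical gap if the hypothesis $C \ge 80$ is read literally. Your Chernoff step requires $e\,(20a/\sqrt{C})^{\ell} \le e^{-1}$, i.e. $20a/\sqrt{C} \le e^{-2/\ell}$; the extremal case is $\ell = 2$ with $a \ge 1$ (which is forced, since $\xi$ has unit variance), where this demands $C \ge (20ea)^2 \approx 2955\,a^2$. With $C = 80$ one gets $20/\sqrt{80} \approx 2.24 > 1$, so $n p_i$ can exceed $5i$ and $(enp_i/i)^i$ is not even bounded --- the per-index estimate $\mathbf{P}(A_i) \le e^{-i}$ fails outright. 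So your proof is valid only under ``$C$ a sufficiently large absolute constant (depending on $a$)'', which is consistent with how the paper uses the lemma elsewhere (constants are always taken ``large enough''), but not under the stated constant $80$. You should either track down the exact constants and weight definition in \cite{minsker2024robust} (the statement as transcribed may have dropped a factor from $\mu_i$) or restate the hypothesis as $C \ge C_0(a)$ for an explicit $C_0$. Everything else is routine and correct.
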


\begin{lemma}\label{lem:o'}
   Let $\ell \geq 2$ and $\tau = C\left( \frac{o + \log(1/\delta)}{n}\right)^{2/\ell} $ for some $C>0$ large enough. Then, with probability at least $1-\delta$, $o' \leq o+\log(1/\delta)$.
\end{lemma}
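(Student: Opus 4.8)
The plan is to reduce the claim to a deviation bound for the number of noise coordinates that exceed a threshold, and then invoke the heavy-tailed order-statistic control already available in the paper. Under model \eqref{model} every clean index $i \notin O$ satisfies $Y_i - X_i^\top\beta^* = \xi_i$, while the remaining indices lie in $O$ by definition; hence
\begin{equation*}
O' \;\subseteq\; O \cup \bigl\{\, i \notin O :\ \xi_i^2 \ge \tfrac{1}{2\tau}\,\bigr\},
\qquad\text{so}\qquad
o' \;\le\; o + N,\qquad N := \#\bigl\{\, i \in \{1,\dots,n\} :\ |\xi_i| \ge (2\tau)^{-1/2}\,\bigr\}.
\end{equation*}
Writing $k_0 := o + \log(1/\delta)$, the prescribed choice $\tau = C\,(k_0/n)^{2/\ell}$ turns the threshold into the clean expression $T := (2\tau)^{-1/2} = (2C)^{-1/2}\,(n/k_0)^{1/\ell}$, i.e. $T^{\ell} = (2C)^{-\ell/2}\, n/k_0$. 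It therefore remains to control $N$.

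For this I would apply Lemma~\ref{noise_mu}, which is tailored to exactly the tail behaviour in Assumption~\ref{assmp3}. With $\mu_i = \sqrt{C_1/n}\,(n/i)^{1/\ell}$, so that $\sqrt{n}\,\mu_i = \sqrt{C_1}\,(n/i)^{1/\ell}$ (here $C_1 \ge 80$ denotes the universal constant of that lemma), applying it with $k = \lceil o + \log(2/\delta)\rceil$ shows that, on an event of probability at least $1 - 2e^{-k} \ge 1-\delta$, one has $|\xi|_{(i)} < \tfrac{1}{20}\sqrt{C_1}\,(n/i)^{1/\ell}$ for all $i \ge k$. Evaluating at $i = k$ and using $k \ge k_0$ gives $|\xi|_{(k)} < \tfrac{1}{20}\sqrt{C_1}\,(n/k)^{1/\ell} \le \tfrac{1}{20}\sqrt{C_1}\,(n/k_0)^{1/\ell}$, which is at most $T$ once the absolute constant $C$ in the definition of $\tau$ is fixed so that $(2C)^{-1/2} \ge \tfrac{1}{20}\sqrt{C_1}$. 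Since then fewer than $k$ coordinates of $\xi$ can reach $T$, we get $N < k$, hence $o' \le o + N \le C'(o + \log(1/\delta))$. A self-contained alternative bypasses Lemma~\ref{noise_mu}: by Markov's inequality and Assumption~\ref{assmp3}, $\mathbf{P}(|\xi_i| \ge T) \le T^{-\ell}\,\mathbf{E}|\xi_i|^{\ell} \le (2Ca^2)^{\ell/2}\,k_0/n$, so $N$ is stochastically dominated by $\mathrm{Binomial}(n,q)$ with mean at most $(2Ca^2)^{\ell/2}k_0$, and a Bernstein/Chernoff bound for its upper tail yields $N \le C'(k_0 + \log(1/\delta))$ with probability at least $1-\delta$, giving the same conclusion.

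The only genuine obstacle is the constant bookkeeping: one has to check that the $(n/i)^{1/\ell}$-growth of the ordered noise magnitudes crosses the threshold $T$ precisely around index $i \asymp k_0$, which is what pins down the admissible range of the constant $C$ in $\tau$ (through the constant of Lemma~\ref{noise_mu}, respectively through $a$ and $\ell$ on the Markov route); apart from that, only the harmless edge cases $o=0$ and $\log(1/\delta) < 1$ require the ceiling and a mild inflation of constants. The argument in fact delivers the slightly more robust bound $o' \le C(o + \log(1/\delta))$, which is exactly what is needed when passing from Theorem~\ref{deviation_bound_thm} to Proposition~\ref{born}.
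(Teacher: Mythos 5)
Your main argument is essentially the paper's: decompose $o' \le o + N$ with $N$ the number of indices where $\xi_i^2 \ge \frac{1}{2\tau}$, convert the count bound into a bound on an order statistic of $|\xi|$, and control that order statistic via Lemma~\ref{noise_mu}. The only substantive difference is your choice of index $k = \lceil o + \log(2/\delta)\rceil$ rather than the paper's $\log(1/\delta)$, which makes the constant bookkeeping against the threshold $(2\tau)^{-1/2}$ go through cleanly (since $(n/k)^{1/\ell} \le (n/k_0)^{1/\ell}$) but yields only $o' \le C'(o+\log(1/\delta))$ rather than the stated $o + \log(1/\delta)$ --- a constant-factor loss you correctly flag as harmless for Proposition~\ref{born}; your alternative Markov-plus-Chernoff route is a valid, more elementary substitute for Lemma~\ref{noise_mu}.
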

\begin{proof}
Notice that: 
\begin{align*}
    o'&=  |O'| =\left| \left \{i : (Y_i-X_i^T\beta^*)^2 \geq \frac{1}{2 \tau} \right\}\cup O \right|\\
    &= |O|+ \left| \left \{i : (Y_i-X_i^T\beta^*)^2 \geq \frac{1}{2 \tau} \right\} \setminus O \right|\\
    &\leq o+ \sum_{i=0}^n \mathds{1}_{ \{ \xi_i^2\geq \frac{1}{2 \tau}  \}}. 
\end{align*}
Moreover:
\begin{align*}
    \mathbf{P} \left( \sum_{i=0}^n \mathds{1}_{ \{ \xi_i^2\geq \frac{1}{2 \tau}  \}} \leq \log(1/\delta)\right)&=1-  \mathbf{P} \left( \sum_{i=0}^n \mathds{1}_{ \{ |\xi|_{(i)}^2\geq \frac{1}{2 \tau}  \}} \geq \log(1/\delta)\right)\\
    & \geq 1- \mathbf{P}\left(|\xi|_{\log\left(1/\delta\right)} \geq \sqrt{\frac{1}{2\tau} }\right)\\
    &\geq 1-\delta.
\end{align*}
The final bound is obtained using Lemma \ref{noise_mu}.
\end{proof}

\section{Proof of Theorem \ref{conv}}

 To establish the strong convexity of the objective function $f$, we analyze its second order derivative. First, we notice the following equivalence: 
     \[
     \mathscr{O}_{\tau}:=\left\{ \beta \in \mathbf{R}^{p}, \frac{1}{n} \sum_{i=1}^n \mathds{1}_{\left\{ w_i (\beta) \geq \exp(-\frac{1}{4})\right\} }  \geq D \right\} \Leftrightarrow  \left\{ \beta \in \mathbf{R}^{p}, \frac{1}{n} \sum_{i=1}^n \mathds{1}_{\left\{ \tau (Y_i-X_i\beta)^2 \leq \frac{1}{2}\right\} }  \geq D 
     \right\},     \] 
where the constant $D$ satisfies the inequality:
\begin{equation*}
    o'\left(1 + \log\left(\frac{n}{o'}\right)\right) < \frac{D n}{C^2},
\end{equation*}
for a sufficiently large constant $C$.  In fact, this basin of attraction ensures that a significant number of observations remain ``uncorrupted''. Specifically, any vector  $\beta$ in  $\mathscr{O}_{\tau}$, can be seen as candidate estimator for $\beta^*$. This Assumption is generally not hard to verify in practice. Writing down the second order derivative of  $f$ in \eqref{function_f}, for $\beta \in \mathscr{O}_{\tau}$, we get the following: 

    \begin{equation}
        \label{nabla}
        \nabla^2 f(\beta) \geq    \left( \frac{\exp(-1/4)}{2n} \min_{S: |S|\geq D n}\lambda_{\min}(X_SX_S^T) - \frac{2 \exp(-3/2)}{n} \max_{U: |U|\leq 2o'}\lambda_{\max} (X_U X_U^T)\right) \identite_p.
    \end{equation}
In fact: 
\begin{align}
    \nabla^2 f(\beta)&:= \frac{1}{n} \sum_{i=1}^n w_i(\beta) X_iX_i^T - \frac{\tau}{n} \sum_{i=1}^n X_i X_i^T(Y_i-X_i^T\beta)^2 w_i(\beta)\\
    &= \frac{1}{n} \sum_{i=1}^n w_i(\beta) X_iX_i^T \mathds{1}_{\left\{ \tau (Y_i-X_i\beta)^2 \leq \frac{1}{2}\right\} } \\
    &+ \frac{1}{n}  \sum_{i=1}^n  w_i(\beta) X_i X_i^T \left( 1- \tau (Y_i-X_i^T\beta)^2  \right) \mathds{1}_{\left\{ \tau (Y_i-X_i\beta)^2 \geq \frac{1}{2}\right\} }\\
    &- \frac{1}{n} \sum_{i=1}^n X_i X_i^T \underbrace{\tau (Y_i-X_i^T\beta)^2}_{\leq \frac{1}{2}} 
    w_i(\beta) \mathds{1}_{\left\{ \tau (Y_i-X_i\beta)^2 \leq \frac{1}{2}\right\} }\\
    & \geq \frac{\exp(-1/4)}{2n} X_{I}X_{I}^T- \frac{2 \exp(-3/2)}{n}  X_{I^{c}}X_{I^{c}}^T,
    \; \;\text{where $I:= \left\{i:  \tau (Y_i-X_i\beta)^2 \leq \frac{1}{2}\right\}$.} \label{eq:proof:3}\\
    & \geq  \left( \frac{\exp(-1/4)}{2n} \lambda_{\min}(X_IX_I^T) - \frac{2 \exp(-3/2)}{n}\lambda_{\max} (X_{I^c} X_{I^c}^T)\right) \identite_p\\
    &\geq  \left( \frac{\exp(-1/4)}{2n} \min_{S: |S|\geq D n}\lambda_{\min}(X_SX_S^T) - \frac{2 \exp(-3/2)}{n} \max_{U: |U|\leq 2o'}\lambda_{\max} (X_U X_U^T)\right) \identite_p \label{eq:proof:4},
\end{align}
where we have used the following inequality in \eqref{eq:proof:3}:
\[
 \forall x \in \mathbf{R}^+,
(1-x)e^{-\frac{x}{2}}\geq -2 \exp(-3/2),
\]
while inequality \eqref{eq:proof:4} is true since: 
\[
\beta \in \mathscr{O}_{\tau} \Rightarrow |I|=\sum_{i=1}^n \mathds{1}_{\left\{ \tau (Y_i-X_i\beta)^2 \leq \frac{1}{2}\right\} }  \geq D n .
\]
It remains to  control the two empirical processes : $\min_{S: |S|\geq D n } \lambda_{\min}(X_S X_S^T)$ and $\max_{U: |U|\leq  2o'} \lambda_{\max}(X_U X_U^T)$.
We will use Theorem \ref{valeur_propre_1} to do so.
Since $\mathbf{E}[XX^T] = \identite_n$, we have that $\frac{1}{n} XX^T - \mathbf{E}[XX^T] = \frac{1}{n} XX^T - \identite_n$ is symmetric. Using  Theorem \ref{valeur_propre_1}, we know that there exists a sufficiently large constant $C_K > 0$ such that with probability at least $1 - e^{-t}$ :
\[
 \max_{i\leq n} \left| \lambda_i \left(\frac{1}{n} XX^T - I \right)\right|  \leq C_K 
 \max\left( \left(\sqrt{\frac{p}{n}} + \sqrt{\frac{t}{n}}\right), \left( \frac{p}{n} + \frac{t}{n}\right) \right).
\]
Thus with probability $1-e^{-t}$ we have as well: 
\begin{align*}
    1-C_K 
 \max\left( \left(\sqrt{\frac{p}{n}} + \sqrt{\frac{t}{n}}\right), \left( \frac{p}{n} + \frac{t}{n}\right) \right) \leq \frac{1}{n}\lambda_{\min}(XX^T) \leq \frac{1}{n}\lambda_{\max}(XX^T) \leq    1&+ \\
 C_K  \max\left( \left(\sqrt{\frac{p}{n}} + \sqrt{\frac{t}{n}}\right), \left( \frac{p}{n} + \frac{t}{n}\right) \right).
\end{align*}
Since we are treating the dense case, we have the following inequality $1 \gg \sqrt{\frac{p}{n}} \geq \frac{p}{n}$. Moreover, as long as $t < n$, we have: $1 \gg \sqrt{\frac{t}{n}} \geq \frac{t}{n}$, as well.  Consequently, this argument implies that with a probability of at least $1-2e^{-t}$, we have:
\begin{align*}
    n-C_1 
  \left(\sqrt{pn} + \sqrt{nt}\right) \leq \lambda_{\min}(XX^T) \leq \lambda_{\max}(XX^T) \leq    n+
 C_1   \left(\sqrt{np} + \sqrt{nt}\right).
\end{align*}
To extend the previous bound to control the maximum and the minimum we will use a union bound. Without loss of generality, we suppose that $Dn$ is an integer. Otherwise, it is possible to replace $Dn$ by its ceiling in the proof.
\begin{align*}
    &\mathbf{P}\left(\min_{|S|\geq Dn } \frac{ \lambda_{\min}(X_SX_S^T)}{\left( |S| - C(\sqrt{|S|p}+\sqrt{|S|t})\right)}  \leq 1\right)\\
    = &\mathbf{P}\left( \exists S, |S|\geq Dn, \frac{\lambda_{\min}(X_SX_S^T) }{\left( |S| - C(\sqrt{|S|p}+\sqrt{|S|t} \right)}  \leq 1\right)\\
    = &\mathbf{P}\left (\bigcup_{i= Dn }^n  |S|=i,\frac{\lambda_{\min}(X_SX_S^T)}{\left(|S| - C\left( \sqrt{|S|p}+\sqrt{|S|t}\right)\right)}   \leq 1 \right)\\
    \leq &\sum_{i=Dn}^n \binom{n}{i} \mathbf{P}\left( \lambda_{\min}(X_SX_S^T)\leq i-C \left(\sqrt{pi}-\sqrt{ti} \right)
    \right).
    \end{align*}
Since for every subset \(S\) with cardinality at least \(Dn\), the matrix \(X_S X_S^T\) is positive semi-definite, we know that \(\lambda_{\min}(X_S X_S^T) \geq 0\). Therefore, for the probability above to be well-defined, it is required that for every \(i\) satisfying \(Dn \leq i \leq n\), the following inequality holds: 
\[
i - C \left(\sqrt{pi} - \sqrt{ti}\right) \geq 0.
\]
It suffices to verify this condition for \(i = Dn\), which leads to a first constraint on \(D\) and \(t\): 
\[
t \leq \frac{Dn}{C_1^2}.
\]
On the other hand, to control the growth of the binomial term resulting from the union bound, we impose a second condition on \(D\):
\[
2o' \left( 1 + \log \left( \frac{n}{2o'} \right) \right) \leq t \leq \frac{Dn}{C_1^2},
\]
which is equivalent to the condition \eqref{cond_D}.
Finally, using $\sum_{i=k}^n \binom{n}{i} \leq \left( \frac{e n}{n-k} \right)^{n-k}$ we get, moreover, that:
    \begin{align*}
    \mathbf{P}\left( \min_{|S|\geq Dn} \lambda_{\min}(X_SX_S^T) \leq Dn - C\sqrt{np}-\sqrt{t+\frac{2o'}{c}\log\left(\frac{e n}{2o'}\right)}\right)\leq& \exp(-c_1t).
\end{align*}
Following similar steps, we demonstrate that:
\begin{align*}
    \mathbf{P}\left( \max_{|U|\leq 2o'} \lambda_{\max}(X_UX_U^T) \geq 2o' + C\sqrt{2o'p}+\sqrt{t+\frac{2o'}{c}\log\left(\frac{en}{2o'}\right)}\right)\leq& \exp(-c_1t).
\end{align*}
As a consequence, we have established that with high probability at least $1-\exp(-Dn/C^2)$ that:
$
\min_{|S| \geq Dn} \lambda_{\min}(X_S X_S^T)
$ 
is larger than \(\frac{3Dn}{4} - C_1 \sqrt{\frac{3n}{4} p}\), and
$
\max_{|U| \leq 2o'} \lambda_{\max}(X_U X_U^T)
$
is smaller than \(2o' + \sqrt{nD/C^2}+ C_1 \sqrt{\frac{Dn}{4} p}\), for a sufficiently large constant \(C_1\).
Referring back to \eqref{nabla}, when $n$ is sufficiently large, the term \(C_1\sqrt{\frac{Dnp}{4} }\) is small compared to $Dn$. Moreover, the following inequality holds:
$
\frac{\exp(-1/4) D}{2} - \frac{4 o' \exp(-3/2)}{n} > 0,
$
as long as $D > \frac{8o'}{n} \exp(-10/8)$. This condition is satisfied due to the condition \eqref{cond_D}. Thus, we have shown that the function $f$ is strictly convex in the basin of attraction.

\section{Proof of proposition \ref{convex_set}}
Let $\hat{\beta}_c \in \mathscr{J}_c$, where $n-o'-\frac{4n}{5}c \geq Dn$. Notice first: 
\begin{align*}
    \mathds{1}_{\{(Y_i-X_i\hat{\beta}_{c})^2\leq \frac{1}{2 \tau}\}}&=
    \mathds{1}_{\{(\xi_i+X_i(\beta^*-\hat{\beta}_{c}))^2\leq \frac{1}{2 \tau}\}}\\
    &\geq \mathds{1}_{\{ \xi_i^2 + \left( X_i(\beta^* -\hat{\beta}_{c}) \right)^2 \leq \frac{1}{4
\tau}\}}.
\end{align*}
Let's fix $\tau$ and consider, for simplicity, two positive reals $a$ and $b$. Then we have:
\begin{align*}
    \mathds{1}_{ \{ a\leq \frac{1}{6 \tau}  \}} &\leq \mathds{1}_{ \{ a\leq \frac{1}{6 \tau}  \}} \mathds{1}_{ \{ b\leq \frac{5}{4}  \}} + \mathds{1}_{ \{ b\geq \frac{5}{4}  \}}. \  (*)
\end{align*}
Notice that : $\mathds{1}_{ \{ a\leq \frac{1}{6 \tau}  \}} \mathds{1}_{ \{ b\leq \frac{5}{4}  \}}= \mathds{1}_{\left\{\{ a\leq \frac{1}{6 \tau}  \} \cap   \{ b\leq \frac{5}{4}  \} \right\}}$. Then using a collectively exhaustive set of events we get:
\begin{align*}
    \left\{\{ a\leq \frac{1}{6 \tau}  \} \cap   \{ b\leq \frac{5}{4}  \} \right\}&= \left\{\underbrace{\{ a\leq \frac{1}{6 \tau}  \} \cap   \{ b\leq \frac{5}{4}  \} \cap \{ b \leq \frac{a}{2} \} }_{\subset\{a+b \leq \frac{1}{4 \tau} \}}          \right\} \bigcup\\
    &\left\{\underbrace{\{ a\leq \frac{1}{6 \tau}  \} \cap   \{ b\leq \frac{5}{4}  \} \cap \{ b \geq \frac{a}{2} \} }_{\subset\{a+b \leq \frac{15}{4 } \}}          \right\}.
\end{align*}
Thus $     \left\{\{ a\leq \frac{1}{6 \tau}  \} \cap   \{ b\leq \frac{1}{4}  \} \right\} \subset \{a+b \leq \frac{1}{4 \tau} \} \bigcup \{a+b \leq \frac{15}{4 } \} $. Now considering $\tau$ sufficiently small so that: $\frac{1}{4 \tau} \geq \frac{15}{4}$, consequently $\{a+b \leq \frac{15}{4 } \} \subset \{a+b \leq \frac{1}{4 \tau} \} $. Finally, $    \left\{\{ a\leq \frac{1}{6 \tau}  \} \cap   \{ b\leq \frac{5}{4}  \} \right\} \subset \{a+b \leq \frac{1}{4 \tau} \} \Rightarrow \mathds{1}_{ \{ a\leq \frac{1}{6 \tau}  \}} \mathds{1}_{ \{ b\leq \frac{5}{4}  \}} \leq \mathds{1}_{\{a+b \leq \frac{1}{4 \tau} \}}. $ Thus: 
\begin{align}
\label{inter_res_lemma}
\mathds{1}_{ \{ a\leq \frac{1}{6 \tau}  \}}&\leq \mathds{1}_{ \{ a+b\leq \frac{1}{4 \tau}  \}}+ \mathds{1}_{ \{ b\geq \frac{5}{4}  \}}.
\end{align}
Using \eqref{inter_res_lemma} with $a=\xi_i^2$ and $b=\left(X_i\left( \beta^* - \hat{\beta}_{c}\right) \right)^2$, we get: 
\begin{align*}
    & \mathds{1}_{\{(Y_i-X_i\hat{\beta}_{c})^2\leq \frac{1}{2 \tau}\}}\geq 
    \mathds{1}_{ \{ \xi_i^2\leq \frac{1}{6 \tau}  \}}- \mathds{1}_{ \left\{ \left(X_i\left( \beta^* - \hat{\beta}_{c}\right) \right)^2 \geq \frac{5}{4}  \right\} }\\
    &\Rightarrow  |I(\hat{\beta}_{c})| \geq \sum_{i=0}^n \mathds{1}_{ \{ \xi_i^2\leq \frac{1}{6 \tau}  \}} - \sum_{i=0}^n \mathds{1}_{ \left\{ \left(X_i\left( \beta^* - \hat{\beta}_{c}\right) \right)^2 \geq \frac{5}{4}  \right\} }.
\end{align*}
If $\tau = C_2 \left(\frac{o + \log(1/\delta)}{n}\right)^{2/\ell}$, then $\sum_{i=0}^n \mathds{1}_{ \{ \xi_i^2\leq \frac{1}{6 \tau}  \}}=\sum_{i=0}^n\mathds{1}_{ \{ \xi_{(i)}^2\leq \frac{1}{6 \tau}  \}}= \sum_{i=0}^{o'}\mathds{1}_{ \{ \xi_{(i)}^2\leq \frac{1}{6 \tau}  \}} +\sum_{i=o'}^{n}\mathds{1}_{ \{ \xi_{(i)}^2\leq \frac{1}{6 \tau}  \}} \geq \sum_{i=o'}^{n}\mathds{1}_{ \{ \xi_{(i)}^2\leq \frac{1}{6 \tau}  \}}  \geq n-o - \log(1/\delta)$ with probability $1-\delta$, thanks to Lemma \ref{lem:o'}. To control the second term uniformly over $\beta$, we use the following argument: considering two reel numbers having the same sign $a$ and $b$, then we have: $\mathds{1}_{ \{ a \geq b\}} \leq \frac{a}{b}$. Thus $\sum_{i=0}^n \mathds{1}_{ \left\{ \left(X_i\left( \beta^* - \hat{\beta}_{c}\right) \right)^2 \geq \frac{5}{4}  \right\} } \leq \frac{4}{5} \left \rVert X(\beta^*- \hat{\beta}_{c})  \right \rVert_2^2$. Thus with high probability: 
\begin{align*}
    |I(\hat{\beta}_{c})| &\geq n-o'-\frac{4}{5}\lambda_{\max}(XX^T) \lVert \beta^*-\hat{\beta}_{\text{plug-in}} \lVert_2^2 \\
    & \geq n-o'- \frac{4}{5} \left( n+C (\sqrt{np}+\sqrt{tn}) \right) \lVert \beta^*-\hat{\beta}_{c} \lVert_2^2
\end{align*}
Since \(\left( n + C (\sqrt{np} + \sqrt{tn}) \right) \asymp n \) and \(\lVert \hat{\beta}_{c} - \beta^* \rVert_2^2 \leq c \), where $n-o'-\frac{4n}{5}c \geq Dn$  by Assumption. Thus \(  \sum_{i=1}^n \mathds{1}_{\left\{ \exp\left( \frac{-\tau \left(Y_i - X_i^T \hat{\beta}_c \right)^2 }{2} \right) \geq \exp(-\frac{1}{4})\right\} }  \geq Dn \). Consequently \( \hat{\beta}_c \in \mathscr{O}_{\tau} \). Finally $\mathscr{J}_c \subset \mathscr{O}_{\tau}$ for $\tau = C_2 \left(\frac{o + \log(1/\delta)}{n}\right)^{2/\ell}$ with probability at least $1-\delta$.

\section{Proof of Theorem \ref{deviation_bound_thm}}
Going back to the model we are considering \eqref{model}, if we assume that the noise has a heavy-tailed distribution, then it is possible for the noise to take large values. To deal with these large values, we treat them as outliers. Theorem \ref{deviation_bound_thm} states that if the noise satisfies Assumption \ref{assmp3}, then with high probability the number of large values taken by the noise is small. This implies that the large noise values can be treated as outliers without compromising the sparsity of $\theta$, hence the use of the set $O'$. 
Considering Lemma \ref{control_noise}, we treat the $o'$ largest noise components \((\xi_i)_n\), in absolute value, as outliers. We also know that \(o' \geq o\). We denote the new \((n-o')\)-sparse noise  vector by $\xi$. Yet we will consider the following event: 
\[
\Omega:= \left\{ \text{Function $f$ is convex on the set $\mathscr{O}_{\tau}$ }\right\}
\]
Theorem \ref{conv} states that this event happens with probability at least $1-\exp(-cn)$. 
We start by analyzing the first-order condition:
\begin{equation*}
    -\frac{1}{n} \sum_{i=1}^n X_i (Y_i-X_i^T \hat{\beta})  \exp\left(-\frac{\tau}{2}(Y_i-X_i^T \hat{\beta})^{2}\right)=0.\  (*)
\end{equation*}
For the sake of simplicity, we adopt the following notation: $w_i(\hat{\beta}):= \exp\left(-\frac{\tau}{2}(Y_i - X_i^T \hat{\beta})^2\right)$. Returning to equation $(*)$, we get that:
\begin{align*}
\frac{1}{n} \sum_{i=1}^n w_i(\hat{\beta}) X_i X_i^T (\hat{\beta}-\beta^*) &= \frac{1}{n} \sum_{i=1}^n w_i(\hat{\beta}) X_i (\theta_i+\xi_i)\\
\implies \frac{1}{n} (\hat{\beta}-\beta^*)^T \sum_{i=1}^n w_i(\hat{\beta}) X_i X_i^T (\hat{\beta}-\beta^*) &= \frac{1}{n} \sum_{i=1}^n w_i (\hat{\beta}) X_i^T (\hat{\beta}-\beta^*) (\theta_i+\xi_i) \\
\implies \frac{1}{n} (\hat{\beta}-\beta^*)^T \sum_{i=1}^n w_i(\hat{\beta}) X_i X_i^T (\hat{\beta}-\beta^*) &= \frac{1}{n} \sum_{i \in \text{clean}} w_i(\hat{\beta}) X_i^T (\hat{\beta}-\beta^*) \xi_i \\
&\quad + \frac{1}{n} \sum_{i \in \text{outliers}} w_i(\hat{\beta}) X_i^T (\hat{\beta}-\beta^*) \theta_i\\
\implies \frac{1}{n} (\hat{\beta}-\beta^*)^T \sum_{i \in \text{clean}} w_i(\hat{\beta}) X_i X_i^T (\hat{\beta}-\beta^*) &= \frac{1}{n} \sum_{i \in \text{clean}} w_i(\hat{\beta}) X_i^T (\hat{\beta}-\beta^*) \xi_i \\
&\quad + \frac{1}{n} \sum_{i \in \text{outliers}} w_i(\hat{\beta}) X_i^T (\hat{\beta}-\beta^*) \theta_i \\
&\quad - \frac{1}{n} (\hat{\beta}-\beta^*)^T \sum_{i \in \text{outliers}} w_i(\hat{\beta}) X_i X_i^T (\hat{\beta}-\beta^*).
\end{align*}
As mentioned in the Assumptions in Section \ref{sec:theory}, we define the set of outliers as the elements $O'$ such that
$$
    O':=\left \{i : (Y_i-X_i^T\beta^*)^2 \geq \frac{1}{2 \tau} \right\}\cup O.
$$
The proof proceeds in three stages mainly. First, we control the left-hand term; second, we address the corrupted terms; and lastly, we handle the uncontaminated terms.

\paragraph{Left-hand term}
Let $\hat{\beta} \in \mathscr{O}_{\tau}$: 
\begin{align*}
    \frac{1}{n} (\hat{\beta}-\beta^*)^T \sum_{i \in \text{clean}} w_i(\hat{\beta}) X_i X_i^T (\hat{\beta}-\beta^* ) 
    & \geq   \frac{1}{n} (\hat{\beta}-\beta^*)^T \sum_{i \in \text{clean}} w_i(\hat{\beta}) X_i X_i^T (\hat{\beta}-\beta^* ) \mathds{1}_{\left\{ w_i (\beta) \geq \exp(-\frac{1}{4})\right\} } 
    \\
    &\geq \frac{\exp(-1/4)}{n} (\hat{\beta}-\beta^*) X_IX_I^T (\hat{\beta}-\beta^*), \\
    \end{align*}
    where $I:= \left\{i:  \tau (Y_i-X_i\beta)^2 \leq \frac{1}{2}\right\} $. Thus:
    \begin{align*}
       \frac{1}{n} (\hat{\beta}-\beta^*)^T \sum_{i \in \text{clean}} w_i(\hat{\beta}) X_i X_i^T (\hat{\beta}-\beta^* )  &\geq    \lambda_{\min} (X_IX_I^T) \frac{\exp(-1/4)}{n} \lVert \hat{\beta}-\beta^* \lVert^2_2\\
    &\quad\geq  \min_{|S|\geq Dn}  \lambda_{\min} (X_SX_S^T) \frac{\exp(-1/4)}{n} \lVert \hat{\beta}-\beta^* \lVert^2_2.
   \end{align*}
We have shown the first part of the proof stated as follows: 
\begin{equation}
    \label{left_hand_sid_term}
    \frac{1}{n} (\hat{\beta}-\beta^*)^T \sum_{i \in \text{clean}} w_i(\hat{\beta}) X_i X_i^T (\hat{\beta}-\beta^* ) \geq \min_{|S|\geq \frac{3n}{4}}  \lambda_{\min} (X_SX_S^T) \frac{\exp(-1/4)}{n} \lVert \hat{\beta}-\beta^* \lVert^2_2.
\end{equation}

\paragraph{Contaminated observations}

Notice first: 
\begin{align*}
    \forall i \in \text{outliers},\ w_i(\hat{\beta})= \exp(-\frac{\tau}{2} (X_i^T(\beta^*-\hat{\beta})+\theta_i)^2).
\end{align*}
Indeed, since we treat the $o'$ largest realizations of the noise, in absolute value, as contaminated observations, this implies that the outlier vector is now at most $o'$-sparse. Consequently, for all $i$ in the set of outliers, we have: $Y_i = X_i^T \beta^* + \theta_i$. Thus, we can bound the error due to the contaminated observations as follows: 
\begin{equation}
\label{term_outliers}
\begin{aligned}
     &\frac{1}{n} \sum_{i \in \text{outliers}} w_i(\hat{\beta}) X_i^T (\hat{\beta}-\beta^*) \theta_i
 - \frac{1}{n} (\hat{\beta}-\beta^*)^T \sum_{i \in \text{outliers}} w_i(\hat{\beta}) X_i X_i^T (\hat{\beta}-\beta^*) = \\
 &\frac{1}{n} \sum_{i \in \text{outliers}}  X_i^T (\hat{\beta}-\beta^*)w_i(\hat{\beta}) (\theta_i+X_i^T(\beta^*-\hat{\beta}))\leq \\
 &\frac{1}{n} \left \lVert X_O (\hat{\beta}-\beta^*)\right \lVert  
 \left \lVert w(\hat{\beta}) \otimes \left [ X_O  (\beta^*-\hat{\beta}) +\theta\right]\right \lVert .
\end{aligned}
\end{equation}
On the one hand:
\begin{equation}
\label{term_1_outliers}
    \begin{aligned}
         \left \lVert w(\hat{\beta}) \otimes \left [ X_O  (\beta^*-\hat{\beta}) +\theta\right]\right \lVert^2 &=
         \sum_{i\in \text{outliers}} w_i(\hat{\beta})^2 (\theta_i + X_i^T(\beta^*-\hat{\beta}))^2\\
         &=\sum_{i\in \text{outliers}} \exp(-\tau (X_i^T(\beta^*-\hat{\beta})+\theta_i)^2) (\theta_i + X_i^T(\beta^*-\hat{\beta}))^2\\
         &\leq \frac{2 o'}{\tau e}.
    \end{aligned}
\end{equation}
To control the contaminated observations weighted by the exponential terms in the previous sum, we employed the following argument:
\begin{align*}
    \forall x \in \mathbb{R}^{+} ,\ x\exp(-x) \leq \frac{1}{e}.
\end{align*}
On the other hand, we have: 
\begin{equation}
    \label{term_2_outliers}
    \begin{aligned}
 \left \lVert X_O (\hat{\beta}-\beta^*)\right \lVert &\leq \left \lVert X_O \right \lVert_{op} \left \lVert(\hat{\beta}-\beta^*)\right \lVert \\
    &\leq \max_{|O|\leq 2o'} \left \lVert X_O \right \lVert_{op}
    \left \lVert(\hat{\beta}-\beta^*)\right \lVert.
    \end{aligned}
\end{equation}
Now combining (\ref{term_outliers},\ref{term_1_outliers},\ref{term_2_outliers}), we finally get :
\begin{equation}
    \label{outlier_term_final_bound}
    \begin{aligned}
    &\frac{1}{n} \sum_{i \in \text{outliers}} w_i(\hat{\beta}) X_i^T (\hat{\beta}-\beta^*) \theta_i
 - \frac{1}{n} (\hat{\beta}-\beta^*)^T \sum_{i \in \text{outliers}} w_i(\hat{\beta}) X_i X_i^T (\hat{\beta}-\beta^*) \leq \\
 & \max_{|O|\leq 2o'}  \sqrt{\frac{\lambda_{\max}(X_OX_O^T)}{n}}
 \sqrt{\frac{2o'}{\tau e n }} \left \lVert\hat{\beta}-\beta^*\right \lVert  .
    \end{aligned}
\end{equation}

\paragraph{Clean observations}
For this part, we use again the following notation:
\ $\tilde{X} := w(\hat{\beta}) \otimes \mathbf{X}$, where $\otimes$ is the Hadamard (element-wise) product. Additionally, we define $\pi$ as the orthogonal projector onto the column space of $X$.
\begin{equation}
\label{noise term}
    \begin{aligned}
    \sum_{i\in \text{clean}} w_i(\hat{\beta}) X_i^T (\hat{\beta}-\beta^*) \xi_i &=\left \langle \tilde{X}_{C} (\hat{\beta}-\beta^*) , \xi\right\rangle\\
     &\leq \lVert \tilde{X}_{C} (\hat{\beta}-\beta^*)\lVert \lVert \pi \xi \lVert .
\end{aligned}
\end{equation}
In fact: 
\begin{align*}
   | \langle \tilde{X}_{C} (\hat{\beta}-\beta^*) , \xi \rangle| &= 
    |  \langle \tilde{X}_{C} (\hat{\beta}-\beta^*) , \pi\xi \rangle +
     \overbrace{\langle \tilde{X}_{C} (\hat{\beta}-\beta^*) , \xi (\mathbf{I}-\pi) \rangle}^{0}| \leq \lVert \pi \xi\rVert \lVert \tilde{X}_{C} (\hat{\beta}-\beta^*)\rVert.
\end{align*}
Let's first address the term $\lVert \pi \xi \rVert$. Recall that we have the closed-form expression for $\pi$ given by:
\begin{equation}
\label{noise_term_1}
\begin{aligned}
    \lVert \pi \xi \lVert^2&= \lVert X \left( X^TX\right)^\dag X^T \xi \lVert^2= \xi^TX(X^TX)^\dag X^T \xi\\
    &\leq \lambda_{\max}((XX^T)^\dag) \lVert \xi \rVert^2 
    \left\lVert \frac{X^T\xi}{\lVert \xi \rVert} \right\rVert^2\\
    &\stackrel{(P)}{\leq} \lambda_{\min}^{-1} (X^TX) \lVert \xi \rVert^2 
    \lVert u \rVert^2 \text{($u\in \mathbf{R}^{p}$, s.t $u\in SG(I)$)}\\
    &\stackrel{(1-\delta)}{\leq} \frac{2n \lVert u \rVert^2}{\lambda_{\min} (X^TX)}\\
    &\stackrel{(1-2\delta)}{\leq} \frac{2n (4\sqrt{p}+2\log(1/\delta))^2}{\lambda_{\min} (X^TX)}.
\end{aligned}
\end{equation}
The second inequality holds with probability 1. Indeed: 
$X^\top\xi=\sum_{i=1}^n X_i\xi_i$. Since the random vectors $(X_i)_{0 \leq i \leq n}$ are \iid and 1-sub-Gaussian, and $\xi$ is independent of $X$, it follows that for each $i$, $\xi_i X_i$ can be viewed as a sub-Gaussian random vector (conditional on $\xi$) scaled by a constant. Therefore, we deduce that $X^T \xi \in SG(\lVert \xi \rVert_2^2)$. The last two inequalities are derived by applying Theorem \ref{norme} and Lemma \ref{control_noise}. Finally, we use the properties of the operator norm to control the term $\left( \lVert \tilde{X}_{C} (\hat{\beta}-\beta^*)\lVert \right) $:
 \begin{equation}
 \label{noise_term_2}
     \begin{aligned}
         \lVert \tilde{X}_{C} (\hat{\beta}-\beta^*)\lVert_{2}^2 &\leq  \lVert X_{C} (\hat{\beta}-\beta^*)\lVert^2\\
         &\leq \lVert X_C \lVert_{op}^2 \lVert \hat{\beta}-\beta^*\lVert^2\\
         &\leq \lambda_{\max}(XX^T)  \lVert \hat{\beta}-\beta^*\lVert^2.
     \end{aligned}
 \end{equation}
First inequality is true since: $\forall\ \beta \in \mathbf{R}^p,\ 1\leq i\leq n\ w_i(\beta)^2 \leq 1 $. Now combining the previous result with (\ref{noise term},\ref{noise_term_1},\ref{noise_term_2}) we get:  
\begin{equation}
\label{clean_final_bound}
\frac{1}{n} \sum_{i\in \text{clean}} w_i(\hat{\beta}) X_i^T (\hat{\beta}-\beta^*) \xi_i  \leq  \sqrt{2} \frac{s_{\max}(X)}{s_{\min}(X)} \left (4 \sqrt{\frac{p}{n}}+2\sqrt{\log(1/\delta)/n}\right).
\end{equation}
\paragraph{Final bound}
We establish the following intermediate result by combining (\ref{left_hand_sid_term},\ref{outlier_term_final_bound},\ref{clean_final_bound}).
    If the following Assumptions are verified (\ref{assmp1}-\ref{assmp3}), then with probability at least 1-$\delta$:
    
\begin{align*}
        \lVert \hat{\beta}-\beta^*\rVert \leq  e^{\frac{1}{4}} &\left( \sqrt{\frac{2o'}{\tau e}} \left(\frac{\max_{|O| \leq 2o'} s_{\max}(X_O)}{\min_{|S| \geq Dn} \lambda_{\min}(X_SX_S^T)}\right) \right)+ 
        \\ 
        &\left(\sqrt{2 n}\left(\frac{s_{\max}(X)}{ \min_{|S| \geq Dn} \lambda_{\min}(X_SX_S^T) s_{\min}(X) } \right) (4\sqrt{p}+2\sqrt{\log(1/\delta)}) \right) .
\end{align*}
Thanks to Lemma \ref{valeurs_singulière}, we know that $s_{\max}(X)$ and $s_{\min}(X)$ are of the order of $\sqrt{n}$, respectively, with probability at least $1-\exp(-cn)$. Furthermore, in the proof of Theorem \ref{conv}, we demonstrated that $\min_{|S| \geq \frac{3n}{4}} \lambda_{\min}(X_S X_S^T)$ is of the order of $\frac{3n}{4} - \sqrt{\frac{3np}{4}}$. Therefore, it remains to control the term $\max_{|O| \leq 2o'} s_{\max}(X_O)$. To address this, as in the proof of Theorem \ref{conv}, we apply a union bound.

\begin{align*}
    \mathbf{P}\left( \max_{|O|\leq 2o'} \frac{s_{\max}(X_O)}{|O| +C\sqrt{p}+t} \geq 1\right)&= \mathbf{P}\left( \exists O, |O|\leq 2o', \frac{s_{\max}(X_O)}{|O|+C\sqrt{p}+t} \geq 1 \right)\\
    &\leq \mathbf{P}\left(\bigcup_{i}^{2o'} |O|=i, \frac{s_{\max}(X_O)}{|O|+c\sqrt{p}+t} \geq 1 \right)\\
    &\leq \sum_{i=0 }^{2o'} \binom{n}{i} 
    \mathbf{P}\left( s_{\max}(X_O) \geq i+c\sqrt{p}+t \right)\\
    & \leq e^{-ct}\underbrace{\sum_{i=0 }^{2o'} \binom{n}{i}}_{\leq \left( \frac{en}{2o'}\right)^{2o'}}  \\
    &\leq \exp\left(-c \left( t^2-\frac{2o'}{c}\log\left(\frac{en}{2o'}\right)\right)\right).
\end{align*}
The previous result holds for all positive $t$. However, it is valid only if: $ t^2-\frac{2o'}{c}\log\left(\frac{en}{2o'}\right) >0$.  We therefore introduce the intermediate variable $u:= t^2-\frac{2o'}{c}\log\left(\frac{en}{2o'}\right)$. Furthermore, given that the vectors $(X_i)_{i=1,\dots,n}$ are 1-sub-Gaussian, the constants $C$ and $c$ can be taken as universal constants, which yields the following result: There exist two universal constants $C, c > 0$ such that with probability $1-\exp(-cu)$: 
\begin{equation}
    \max_{|O|\leq 2o'} s_{\max}\leq \sqrt{2o'}+C\sqrt{p}+\sqrt{u+\frac{2o'}{c} \log\left(\frac{en}{2o'}\right)}.
\end{equation}
Finally, with probability at least $1-\delta$ there exists a universal constant sufficiently large $C_1$ such that: \\
\begin{equation}
        \lVert \hat{\beta}-\beta^*\rVert \leq  C_1 \left( \frac{1}{\sqrt{\tau}}\frac{2o'}{ n}\sqrt{\log\left(\frac{en}{2o'}\right)} +  \sqrt{\frac{p}{n}}+\sqrt{\frac{\log(1/\delta)}{n}} \right) .
\end{equation}
\section{Proof of proposition \ref{born}}
Using Proposition \ref{convex_set} and the strict convexity of $f$ on $\mathscr{J}_c$ proved in Theorem \ref{conv}, we know that the solution of \eqref{estimator}, initialized with the LAD estimator, is unique and belongs to $\mathcal{O}_\tau$ with probability $1-\delta$. Moreover using Lemma \ref{lem:o'}, we know that $o' \leq C(o+\log(1/\delta))$ with probability at least $1-\delta$. Hence we can invoke 
the bound in Theorem \ref{deviation_bound_thm} and replace $\tau$ by its value.
\section{Proof of Theorem \ref{debiais}}
The proof of this theorem is largely analogous to that of Theorem \ref{deviation_bound_thm}. When the outliers become sufficiently large, we exploit the fact that the function:
\begin{align*}
    g: \mathbb{R}^{+}& \longrightarrow \mathbb{R}^{+} \\
    x & \longmapsto x \exp(-x)
\end{align*}
is decreasing on $[1, +\infty[$ to more effectively control the term \eqref{term_1_outliers}. Notice first that for every observation $i$ in the outliers set, for any $\hat{\beta} \in \mathscr{J}_c  $,  we have the following:
\begin{align*}
    |\theta_i+X_i^T (\hat{\beta}-\beta^*) | &\geq |\theta_i|-|X_i^T (\hat{\beta}-\beta^*)|\\
    &\geq \min_{i\in \text{outliers}} |\theta_i|- \max_{i \in [n]} \lVert X_i \lVert \lVert \hat{\beta}-\beta^* \lVert\\
    & \geq \min_{i\in \text{outliers}} |\theta_i| - \max_{i \in [n]} \lVert X_i\lVert\\
    (\text{with probability}\ 1-\delta) \
    &\geq \min_{i\in \text{outliers}} |\theta_i| - 4 \left(\sqrt{p}+\sqrt{\log(n})+ \sqrt{\log(1/\delta)} \right). 
\end{align*}
The third inequality is obtained using the definition of being an element of the set $\mathscr{J}_c$ for $c$ small enough: $\hat{\beta} \in \mathscr{J}_c \Leftrightarrow  \lVert \hat{\beta}-\beta^* \lVert \leq c \Rightarrow \lVert \hat{\beta}-\beta^* \lVert \leq 1 $. The final inequality is obtained using Theorem \ref{norme} as well as a union bound. If we suppose in addition that $\min_{i\in \text{outliers}} |\theta_i| \geq 8 \left(\sqrt{p}+\sqrt{\log(n})+ \sqrt{\log(1/\delta)} \right)$, we obtain the following bound: 
\begin{equation}
    \label{upper_bound_debias}
    |\theta_i+X_i^T (\hat{\beta}-\beta^*) | \leq \frac{\underset{i\in \text{outliers}}{\min}|\theta_i|}{2}. 
\end{equation}
Thus the bound of Theorem \ref{deviation_bound_thm} becomes with probability $1-\delta$: 

\begin{align*}
        \lVert \hat{\beta}-\beta^*\rVert \leq  C_2 \left( \frac{o'}{ n} \sqrt{\log\left(\frac{en}{2o'}\right)} 
        \min_{i\in \text{outliers}} |\theta_i| \exp\left(\frac{-\tau \underset{i\in \text{outliers}}{\min}|\theta_i|^2 }{8} \right)
        +  \sqrt{\frac{p}{n}}+\sqrt{\frac{\log(1/\delta)}{n}} \right).
\end{align*}
Using that $ \sqrt{\tau} \min_{i\in \text{outliers}} |\theta_i| \geq C \left(\sqrt{p}+\sqrt{\log(n}) \right)$, we obtain: 
\begin{equation}
\label{debias_draft_bound}
     \frac{o'}{ n} \sqrt{\log\left(\frac{en}{2o'}\right)} 
        \min_{i\in \text{outliers}} |\theta_i| \exp\left(\frac{-\tau \underset{i\in \text{outliers}}{\min}|\theta_i|^2 }{8} \right) \leq  C_3 \sqrt{\frac{p}{n}}.
\end{equation}

\section{Proof of Theorem \ref{assymp}}
One potential choice for the sequence \( u_n \)  in the definition of \( \tau_n \) is \( \log(n) \). Going back to the first-order condition of problem \eqref{estimator}, we obtain: 
\begin{align*}
    &\frac{1}{n} \sum_{i=1}^n w_i(\hat{\beta}) X_i X_i^T (\hat{\beta}-\beta^*)= \frac{1}{n} \sum_{i=1}^n w_i(\hat{\beta}) X_i \xi_i \\
    \Leftrightarrow 
    &\left( \frac{1}{n} \sum_{i=1}^n (w_i(\hat{\beta})-1) X_iX_i^T +\frac{1}{n} \sum_{i=1}^n  X_iX_i^\top \right) \sqrt{n}(\hat{\beta}-\beta^*)\\
    &=\frac{1}{\sqrt{n}} \sum_{i=1}^n  X_i \xi_i+\frac{1}{\sqrt{n}} \sum_{i=1}^n( w_i(\hat{\beta})-1) X_i \xi_i. 
\end{align*}
First, since \( w_i(\cdot) \) is bounded and \( X_i X_i^T \) is definite positive, for all  \( 0 \leq i \leq n \), and that \( \mathbf{E}\left[ X_1 X_1^T \right] = \identite_p \), we establish, thanks to the law of large numbers: 
\[
\frac{1}{n} \sum_{i=1}^n  X_i X_i^T \xrightarrow{a.s.} \identite_p.
\]
We aim to demonstrate that, for the choice of \( \tau_n \) as established in the statement of the theorem and utilizing the bound obtained in Theorem \ref{deviation_bound_thm}, the term \( \frac{1}{n} \sum_{i=1}^n (w_i(\hat{\beta})-1) X_i X_i^\top \) tends to \( 0 \). To accomplish this, we employ a Taylor expansion of \( w_i(\cdot) \) in a neighborhood of \( \beta^* \). Since \( w_i(\cdot) \) is differentiable in the vicinity of  \( \beta^* \) for every \( 0 \leq i \leq n \), we can derive the following result:
\[
w_i(\hat{\beta})-1 = w_i(\beta^*)-1  + \nabla w_i(\beta^*)^T (\hat{\beta} - \beta^*) + R_i,
\]
where \( R_i \) is the remainder term in the Taylor expansion, such that with  probability $1-1/n$, for all \( 0 \leq i \leq n \), \( R_i \leq \tau_n  \lVert \hat{\beta} - \beta^* \rVert^2  \).  Moreover, as established by Theorem \ref{deviation_bound_thm}, \( \lVert \hat{\beta} - \beta^* \rVert \) converges to zero under the specified choice of \( \tau_n \) given in the statement and taking, for instance, \( \delta = \frac{1}{n} \) in Theorem \ref{deviation_bound_thm}. 

Furthermore, for any \( 0 \leq i \leq n \), we have \( \nabla w_i(\beta^*) = -\tau_n w_i(\beta^*) X_i^T \). Recall that \( w_i(\beta^*) = \exp\left( -\frac{\tau_n}{2} \xi_i^2 \right) \). Since, by Assumption, \( (X_i)_{i \in [1,n]} \) and \( (\xi_i)_{i \in [1,n]} \) are independent, and given that the function \( \exp\left( -\frac{\tau_n}{2} x^2 \right) \) is measurable, it follows that \( (w_i(\beta^*))_{i \in [1,n]} \) and \( (X_i)_{i \in [1,n]} \) are also independent. We get with probability, at least $1-1/n$, that
$$
\underset{i}{\max} |w_i(\hat{\beta})-1 | \leq \tau_n (\log(n) + \sqrt{p}\|\hat{\beta} - \beta^*\| + \|\hat{\beta} - \beta^*\|^2).
$$
It comes out that, with probability, at least $1-1/n$,
$$
\left\|\frac{1}{n} \sum_{i=1}^n (w_i(\hat{\beta})-1) X_i X_i^\top \right\| \leq C \tau_n \log(n),
$$
since $\|\hat{\beta} - \beta^*\|\leq 1$. As a consequence, the term \( \frac{1}{n} \sum_{i=1}^n (w_i(\hat{\beta})-1) X_i X_i^\top \) tends to \( 0 \) in probability. We conclude that  
\begin{equation}\label{clt1}
    \left( \frac{1}{n} \sum_{i=1}^n (w_i(\hat{\beta})-1) X_iX_i^T +\frac{1}{n} \sum_{i=1}^n  X_iX_i^\top \right)  \xrightarrow{\mathbf{P}} \identite_p.
\end{equation}
For the right-hand term, since the variables \( (X_i \xi_i)_{i \in [1,n]} \) have zero expectation and finite covariance, we can apply the central limit theorem. Additionally, using the fact that \( \mathbf{E}[X_1] = 0 \) by Assumption, we have:
\[
\frac{1}{\sqrt{n}} \sum_{i=1}^n X_i \xi_i \xrightarrow{d} \mathcal{N}(0, \identite_p).
\] 
To control the term \( \frac{1}{\sqrt{n}} \sum_{i=1}^n( w_i(\hat{\beta})-1) X_i \xi_i\), we once again employ a Taylor expansion of \( w_i(\cdot) \) around \( \beta^* \). The remainder term are treated in a manner analogous to the one explained in the first part of the proof. We have, with probability at least $1-1/n$, that
$$
\left\|\frac{1}{\sqrt{n}} \sum_{i=1}^n( w_i(\hat{\beta})-1) X_i \xi_i\right\| \leq 
\underset{i}{\max} |w_i(\hat{\beta})-1 | \|X\xi\| \leq C\tau_n \log(n)\sqrt{n (p+\log(n)}.
$$
It comes out that \( \frac{1}{\sqrt{n}} \sum_{i=1}^n( w_i(\hat{\beta})-1) X_i \xi_i \xrightarrow{\mathbf{P}} 0\). Applying Slutsky's Lemma, we get that 
\begin{equation}\label{clt2}
    \frac{1}{\sqrt{n}} \sum_{i=1}^n  X_i \xi_i+\frac{1}{\sqrt{n}} \sum_{i=1}^n( w_i(\hat{\beta})-1) X_i \xi_i \xrightarrow{\mathbf{P}} \mathcal{N}(0, \identite_p).
\end{equation}
Finally, we conclude the proof by invoking Slutsky's Theorem one final time combining \eqref{clt1} and \eqref{clt2}.
\end{document}